\newtheorem{theorem}{Theorem}[section]
\newtheorem*{theorem*}{Theorem B}
\newtheorem{lemma}[theorem]{Lemma}
\newtheorem{proposition}[theorem]{Proposition}
\newtheorem{corollary}[theorem]{Corollary}
\newtheorem{question}[theorem]{Question}
\newtheorem*{definition*}{Definition}
\newtheorem*{observation*}{Observation}
\newtheorem*{assumption*}{Assumption}
\newtheorem*{question*}{Question}
\theoremstyle{definition}
\newtheorem*{remark*}{Remark}
\newcommand{\R}{\mathbb{R}}
\newcommand{\N}{\mathbb{N}}
\newcommand{\Z}{\mathbb{Z}}
\newcommand{\C}{\mathbb{C}}
\newcommand{\PP}{\mathbb{P}}
\newcommand{\Conf}{\mathrm{Conf}}
\newcommand{\supp}{\mathrm{supp}}
\newcommand{\ct}{\mathrm{T}}
\newcommand{\aut}{\mathrm{Aut}}
\newcommand{\rad}{\mathrm{rad}}
\begin{document}

\title[Radial Toeplitz operators and invariant DPPs on trees]{Radial Toeplitz operators and invariant determinantal point processes on Cayley trees}

\author
{Yanqi Qiu}
\thanks{This research is supported by grants NSFC Y7116335K1,  NSFC 11801547 and NSFC 11688101 of National Natural Science Foundation of China.}
\address
{Yanqi Qiu:Institute of Mathematics and Hua Loo-Keng Key Laboratory of Mathematics, AMSS, Chinese Academy of Sciences, Beijing 100190, China.}
\email{yanqi.qiu@amss.ac.cn; yanqi.qiu@hotmail.com}

\begin{abstract}
We give a complete description of bounded radial Toeplitz operators on the Hilbert space associated with  a Cayley tree. As an application, we give a complete classification of a rich family of determinantal point processes on Cayley trees whose correlation kernels  and thus distributions are invariant under the action of the graph automorphisms of the Cayley trees.
\end{abstract}

\subjclass[2010]{Primary 47B35, 47B65; Secondary 60G55, 42C05.}
\keywords{Cayley trees; radial Toeplitz  kernels; Cartier-Dunau polynomials; Determinantal point processes.}

\maketitle

\setcounter{equation}{0}

\section{Introduction}

\subsection{Radial Toeplitz operators on Cayley trees}
Fix an integer $\kappa \ge 1$  and consider the Cayley tree $\ct_\kappa = (V_\kappa, E_\kappa)$ of order $\kappa$, i.e.,  $\ct_\kappa$ is an infinite undirected connected graph without cycles such that each vertex has exactly $\kappa + 1$ edges. Let $d(\cdot, \cdot)$ denote the graph distance on the set $V_\kappa$ of vertices.   


An edge with end points $x, y \in V_\kappa$ will be denoted by $\wideparen{xy}\in E_\kappa$.  A bijection $\gamma: V_\kappa \rightarrow V_\kappa$ is called a {\it graph automorphism}  of $\ct_\kappa$ provided that $\wideparen{xy} \in E_\kappa$ if and only if $\wideparen{\gamma(x) \gamma(y)} \in E_\kappa$ for any pair $(x, y)$ of distinct vertices of $\ct_\kappa$.  Let $\Gamma = \aut(\ct_\kappa)$ denote the group of the graph automorphisms of $\ct_\kappa$.  Note that any automorphism $\gamma \in \Gamma$ preserves the graph distance  on $V_\kappa$.  

By a kernel on $V_\kappa$, we mean a two-variable function   $K: V_\kappa \times V_\kappa \rightarrow \C$. A kernel $K$  on $V_\kappa$ is called $\Gamma$-equivariant, if 
\[
K(x, y) = K(\gamma(x), \gamma(y)),   \quad \forall x, y \in V_\kappa, \gamma \in \Gamma.
\]
To any function 
$\alpha: \N_0 \rightarrow \C$  defined on the set $\N_0 = \{0, 1, 2, \cdots \}$  is associated a {\it radial Toeplitz kernels} given by 
\begin{align}\label{def-T-phi}
T_\alpha(x, y) = \alpha(d(x, y)), \quad \forall x, y \in V_\kappa. 
\end{align}
Clearly any  $\Gamma$-equivariant kenrel on $V_\kappa$ is of the above radial Toeplitz form.  We denote by $\mathcal{T}_\Gamma(V_\kappa)$ the set of all radial Toeplitz kernels on $V_\kappa$:
\begin{align}\label{def-radial-Toep}
\mathcal{T}_\Gamma(V_\kappa) : = \Big\{K: V_\kappa \times V_\kappa \rightarrow \C \Big| \text{$K = T_\alpha$ for some $\alpha:  \N_0 \rightarrow \C$}\Big\}. 
\end{align}

Let  $B(\ell^2(V_\kappa))$ denote the $C^*$-algebra  consisting  of bounded operators on the standard complex Hilbert space $\ell^2(V_\kappa)$ defined by
\[
\ell^2(V_\kappa): = \Big\{ \sum_{x\in V_\kappa} c_x \delta_x  \Big| \text{$c_x \in \C$ and $\sum_{x} |c_x|^2 <\infty$}\Big\},
\] 
where $\delta_x$ is the Dirac function on $V_\kappa$ at the point $x$.  We will identify any bounded operator $A \in B(\ell^2(V_\kappa))$ with its kernel:
\[
A(x, y) = \langle A \delta_y, \delta_x\rangle, \quad \forall x, y \in V_\kappa,
\]
where $\langle \cdot, \cdot \rangle$ is the standard inner product on $\ell^2(V_\kappa)$. 
Thus if a kernel $K: V_\kappa \times V_\kappa \rightarrow \C$ induces a bounded operator on $\ell^2(V_\kappa)$, then we will use the same notation $K$ for the induced bounded operator. Using this convention, the set
\[
B_\Gamma(\ell^2(V_\kappa)): =  B(\ell^2(V_\kappa)) \cap \mathcal{T}_\Gamma(V_\kappa)
\]
will be viewed both as the subset of bounded operators on $\ell^2(V_\kappa)$ with radial Toeplitz kernels and  as the set of radial Toeplitz kernels inducing bounded operators on $\ell^2(V_\kappa)$.

We are going to   give a complete description of $B_\Gamma(\ell^2(V_\kappa))$. For this purpose, define recursively the Cartier-Dunau polynomials  (see Arnaud \cite{Arnaud94})  by   
\begin{align}\label{cartier-dunau}
P_0(t)   = 1,  \quad P_1(t) = t, \quad t P_n(t) = \frac{\kappa}{ \kappa + 1} P_{n+1} (t) + \frac{1}{\kappa + 1} P_{n-1}(t),  \quad \forall n\ge 1. 
\end{align}
By Favard's theorem,  $(P_n)_{n\ge 0}$ is a sequence of orthogonal polynomials with respect to a unique probability  masure on $\R$. This probability measure is called the orthogonality probability measure of  the sequence $(P_n)_{n\ge 0}$ and is given by the following explicit formula (see the Appendix of this paper): 
\begin{align}\label{pi-kappa}
d\Pi_\kappa (t) =  \frac{\kappa + 1}{2\pi }  \frac{\sqrt{4\kappa (\kappa + 1)^{-2} - t^2}}{1 - t^2}  \mathds{1}_{[- \frac{2 \sqrt{\kappa} }{ \kappa + 1}, \,  \frac{2 \sqrt{\kappa} }{ \kappa + 1} ]} (t) dt.
\end{align}

For simplifying notation, for any $\varphi \in L^1(\Pi_\kappa)$, we set 
\begin{align}\label{def-hat}
\widehat{\varphi}(n): = \int_\R   P_n(t)  \varphi(t) d \Pi_\kappa (t), \quad \forall n \ge 0.
\end{align}
Thus to any $\varphi\in L^1(\Pi_\kappa)$ is associated a radial Toeplitz kernel:
\begin{align}\label{def-T-varphi}
T[\varphi]  :  = T_{\widehat{\varphi}} \in \mathcal{T}_\Gamma(V_\kappa). 
\end{align}

\begin{lemma}\label{lem-alg}
The subset $B_\Gamma(\ell^2(V_\kappa)) \subset B(\ell^2(V_\kappa))$ forms a commutative   sub-$C^*$-algebra.
\end{lemma}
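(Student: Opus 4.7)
The plan is to recognise $B_\Gamma(\ell^2(V_\kappa))$ as the commutant of a unitary representation, which immediately gives the $C^*$-algebra structure, and then to deduce commutativity as an algebraic consequence of the symmetry of radial Toeplitz kernels. First I would introduce the unitary representation $U : \Gamma \to B(\ell^2(V_\kappa))$ defined by $U_\gamma \delta_x = \delta_{\gamma(x)}$. A direct matrix-entry computation shows that a bounded operator $A$ satisfies $A(\gamma(x), \gamma(y)) = A(x,y)$ for every $\gamma \in \Gamma$ and every $x,y \in V_\kappa$ if and only if $A$ commutes with every $U_\gamma$. Since the automorphism group $\Gamma$ acts transitively on ordered pairs of vertices at a given distance, the $\Gamma$-equivariant kernels coincide with the radial Toeplitz kernels, whence
\[
B_\Gamma(\ell^2(V_\kappa)) = \{U_\gamma : \gamma \in \Gamma\}'.
\]
Any commutant in $B(\ell^2(V_\kappa))$ is a von Neumann algebra, in particular a sub-$C^*$-algebra closed under adjoints, linear combinations, products and operator-norm limits.

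For commutativity, I would exploit the fact that every $T_\alpha$ has a symmetric kernel, $T_\alpha(x,y) = \alpha(d(x,y)) = T_\alpha(y,x)$. Define the transpose operation $A \mapsto A^t$ by $A^t(x,y) := A(y,x)$; this satisfies the reversal law $(AB)^t = B^t A^t$. For $A, B \in B_\Gamma(\ell^2(V_\kappa))$ one has $A^t = A$ and $B^t = B$, while the product $AB$ again lies in $B_\Gamma(\ell^2(V_\kappa))$ by the first step and so also has symmetric kernel, i.e.\ $(AB)^t = AB$. Combining these identities,
\[
AB = (AB)^t = B^t A^t = BA,
\]
which is precisely the desired commutativity.

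The only delicate point is the transitivity of $\Gamma$ on ordered pairs of vertices at equal distance used in the first paragraph: given $(x,y)$ and $(x',y')$ with $d(x,y) = d(x',y') = n$, one must produce a graph automorphism carrying the first pair to the second. This is the standard homogeneity of the Cayley tree, obtained by mapping the unique geodesic from $x$ to $y$ onto the one from $x'$ to $y'$ and then extending bijectively using the fact that each vertex off the geodesic roots a $\kappa$-regular subtree of identical combinatorial type; I would simply invoke this homogeneity rather than develop the construction in detail.
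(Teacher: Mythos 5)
Your proof is correct, but it takes a genuinely different route from the paper's. The paper argues by direct computation: it expands the kernel of $T_{\alpha_1}T_{\alpha_2}$ as $\sum_{z}\alpha_1(d(x,z))\alpha_2(d(z,y))$, uses the homogeneity of the tree to transport an arbitrary pair $(x,y)$ onto a fixed geodesic pair $(v_0,v_{d(x,y)})$, concludes that the product is again radial Toeplitz with symbol $\beta(n)=\sum_{z}\alpha_1(d(v_0,z))\alpha_2(d(z,v_n))$, and then derives commutativity from an automorphism swapping $v_0$ and $v_n$. You instead identify $B_\Gamma(\ell^2(V_\kappa))$ with the commutant $\{U_\gamma:\gamma\in\Gamma\}'$ of the permutation representation --- which rests on exactly the same homogeneity facts the paper also invokes without proof, namely transitivity on ordered pairs at equal distance and the swap property --- and obtain the unital $*$-algebra structure for free from the fact that the commutant of a self-adjoint set is a von Neumann algebra; note $U_\gamma^*=U_{\gamma^{-1}}$ makes the set self-adjoint. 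A real dividend of your route is norm (indeed weak-operator) closedness, which the word ``sub-$C^*$-algebra'' requires and which the paper's written proof leaves implicit. Your commutativity argument via the transpose $A^t(x,y)=A(y,x)$ on symmetric kernels is the classical Gelfand trick familiar from symmetric association schemes and Gelfand pairs; it replaces the paper's swap-of-endpoints computation, and for full rigor you should just record that the reversal law $(AB)^t=B^tA^t$ holds entrywise because $\sum_z A(x,z)B(z,y)$ converges absolutely (Cauchy--Schwarz on rows and columns), or equivalently that $A^t=(JAJ)^*$ with $J$ coordinatewise conjugation, so transposes of bounded operators are bounded. What the paper's computational route buys, and yours does not, is the explicit convolution formula for $\alpha_1\circledast_\kappa\alpha_2$, which is not incidental: it is formalized in Lemma \ref{lem-convolution} and reused crucially in the weak-star continuity argument of Lemma \ref{lem-cp-cont} en route to Theorem \ref{thm-alg-iso}. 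As a proof of Lemma \ref{lem-alg} in isolation, however, your argument is complete and arguably cleaner.
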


\begin{theorem}\label{thm-alg-iso}
The following mapping
\begin{align}\label{def-iota}
\begin{array}{cccc}
\iota: & L^\infty(\Pi_\kappa) &\rightarrow & B_\Gamma(\ell^2(V_\kappa))
\vspace{2mm}
\\
& \varphi & \mapsto & T[\varphi]
\end{array}
\end{align}
defines an isomorphism between two commutative  $C^*$-algebras.  
\end{theorem}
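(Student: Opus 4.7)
The plan is to realize $\iota$ as Borel functional calculus at the self-adjoint Markov operator $M := (\kappa+1)^{-1}A \in B_\Gamma(\ell^2(V_\kappa))$, where $A$ is the adjacency operator on $V_\kappa$, and then to invert $\iota$ by restricting to a cyclic subspace for $M$. First I would set up a concrete spectral model for $M$ as follows. Fix a reference vertex $o \in V_\kappa$ and let $K_o \subset \Gamma$ be its stabilizer; the $K_o$-invariant subspace $\ell^2_o \subset \ell^2(V_\kappa)$ coincides with the space of functions depending only on $d(\cdot, o)$, and carries the orthonormal basis $e_n := |S_n|^{-1/2}\mathds{1}_{\{d(\cdot,o)=n\}}$, where $S_n = \{x : d(x,o) = n\}$ has $|S_0|=1$ and $|S_n|=(\kappa+1)\kappa^{n-1}$ for $n\ge 1$. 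A direct enumeration of nearest neighbors in adjacent spheres shows that $M|_{\ell^2_o}$ is a Jacobi operator whose coefficients, once orthonormalized, match precisely the three-term recurrence \eqref{cartier-dunau}. Consequently the map $U:\ell^2_o \to L^2(\Pi_\kappa)$ sending $e_n \mapsto \sqrt{|S_n|}\,P_n$ is a unitary isomorphism that intertwines $M|_{\ell^2_o}$ with multiplication by $t$; in particular $\delta_o$ is cyclic for $M|_{\ell^2_o}$ with scalar spectral measure $\Pi_\kappa$, so $\sigma(M) = \supp \Pi_\kappa$.

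The key identity $P_n(M) = T[P_n]$ for every $n \ge 0$ I would prove by induction using \eqref{cartier-dunau}. The base cases $P_0(M) = I = T[P_0]$ and $P_1(M) = M = T[P_1]$ are immediate once one computes $\|P_n\|_{L^2(\Pi_\kappa)}^2 = c_n = 1/|S_n|$. The inductive step reduces to the kernel identity $(M\cdot T[P_n])(x,y) = \tfrac{\kappa}{\kappa+1}T[P_{n+1}](x,y) + \tfrac{1}{\kappa+1}T[P_{n-1}](x,y)$, which is a tree-counting calculation using the relation $c_{n+1} = c_n/\kappa$ for $n \ge 1$. This identity determines the mixed spectral measures $\mu_{\delta_x,\delta_y}$ on polynomials via $\int P_n\,d\mu_{\delta_x,\delta_y} = \langle P_n(M)\delta_y,\delta_x\rangle = c_n\delta_{n,d(x,y)} = \int P_n\, P_{d(x,y)}\,d\Pi_\kappa$; since polynomials are norm-dense in $C(\sigma(M)) = C(\supp \Pi_\kappa)$, we obtain $d\mu_{\delta_x,\delta_y} = P_{d(x,y)}\,d\Pi_\kappa$. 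Borel functional calculus then gives, for every $\varphi \in L^\infty(\Pi_\kappa)$,
\[
\varphi(M)(x,y) = \int \varphi\,d\mu_{\delta_x,\delta_y} = \int \varphi\, P_{d(x,y)}\,d\Pi_\kappa = \widehat{\varphi}(d(x,y)) = T[\varphi](x,y),
\]
so $T[\varphi]$ is bounded, belongs to $B_\Gamma$, and $\iota(\varphi) = \varphi(M)$.

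For bijectivity, injectivity is clear: $T[\varphi] = 0$ forces $\widehat{\varphi}(n) = 0$ for all $n$, hence $\varphi = 0$ in $L^2(\Pi_\kappa)$ by completeness of the orthogonal system $(P_n)$. For surjectivity, fix $T \in B_\Gamma$: $\Gamma$-equivariance forces $T$ to preserve $\ell^2_o$, and by Lemma \ref{lem-alg} the restriction $T|_{\ell^2_o}$ commutes with $M|_{\ell^2_o}$; simple spectrum of $M|_{\ell^2_o}$ implies that $W^*(M|_{\ell^2_o}) \cong L^\infty(\Pi_\kappa)$ is maximal abelian in $B(\ell^2_o)$, so $T|_{\ell^2_o} = \varphi(M|_{\ell^2_o}) = T[\varphi]|_{\ell^2_o}$ for some $\varphi \in L^\infty(\Pi_\kappa)$. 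The restriction map $B_\Gamma \to B(\ell^2_o)$ is injective, for any $S \in B_\Gamma$ with $S\delta_o = 0$ has a vanishing radial Toeplitz kernel; hence $T = T[\varphi] = \iota(\varphi)$, and isometry follows automatically for a $*$-isomorphism between $C^*$-algebras. The main technical obstacle is the concrete Jacobi-matrix identification of $M|_{\ell^2_o}$ with multiplication by $t$ in $L^2(\Pi_\kappa)$: the asymmetric root effect ($\kappa+1$ neighbors at $o$ versus $\kappa$ children elsewhere) must conspire exactly with the non-standard normalization of the Cartier-Dunau polynomials and the explicit density \eqref{pi-kappa}, and this matching carries essentially all of the analytic content of the theorem.
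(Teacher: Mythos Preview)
Your argument is correct and takes a genuinely different route from the paper's. The paper first establishes the set-theoretic bijection (Proposition~\ref{thm-bdd}) by invoking Arnaud's characterization of positive definite radial Toeplitz kernels (Theorem~\ref{thm-arn}), and then proves multiplicativity of $\iota$ separately: first for polynomials via an explicit computation of the convolution $\circledast_\kappa$ (Lemmas~\ref{lem-convolution} and~\ref{lem-mul-poly}), and then for general $\varphi\in L^\infty(\Pi_\kappa)$ by a weak-star density and continuity argument (Lemma~\ref{lem-cp-cont}). By contrast, you realize $\iota$ in one stroke as the Borel functional calculus $\varphi\mapsto\varphi(M)$ of the Markov operator, so multiplicativity, positivity, and isometry all come for free from the spectral theorem; your surjectivity step via the maximal abelian commutant of $M|_{\ell^2_o}$ replaces the paper's appeal to Arnaud's theorem entirely. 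Your approach is more conceptual and self-contained, and in effect proves Lemma~\ref{lem-inv} and Theorem~\ref{prop-inv} (the unitary $U_o$ and the commutative diagram~\eqref{CD-unitary}) along the way rather than as separate statements. What the paper's route buys in exchange is the explicit convolution formula~\eqref{comp-alpha-alpha} on $\mathcal{A}(\kappa)$, which may be of independent combinatorial interest, and a direct link to the positive-definiteness literature on homogeneous trees.
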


\begin{remark*}
Since the map \eqref{def-iota} defines an isomorphism between two $C^*$-algebras, for any fixed function $\varphi \in L^\infty(\Pi_\kappa)$, the spectrum of the operator $T[\varphi]$ is given by the essential range of the function $\varphi$. 
\end{remark*}

Thereom \ref{thm-alg-iso}  can be made more precise in the following Theorem \ref{prop-inv}. 

Fix any vertex $o \in V_\kappa$. A vector $\eta  = \sum_{x\in V_\kappa} \eta_x \delta_x \in \ell^2(V_\kappa)$ is called {\it a radial vector}  with respect to the vertex $o$ if 
\[
\eta_x = \eta_y, \quad \text{whenever $d(x, o) = d(y, o)$}.
\] 
Let $\ell^2(V_\kappa)_{\rad (o)} \subset \ell^2(V_\kappa)$ denote the subspace of radial vectors with respect to $o$ and let $P_{\rad(o)}$ denote the orthogonal projection from $\ell^2(V_\kappa)$ onto the subspace $\ell^2(V_\kappa)_{\rad(o)}$. By setting $S_n(o) = \{ x \in V_\kappa | d(x, o) = n\}$, we get a natural orthonormal basis of $\ell^2(V_\kappa)_{\rad(o)}$:
\begin{align}\label{onb-f-n}
 f_n = \frac{\mathds{1}_{S_n(o)}}{\sqrt{\# S_n(o)}} = \frac{1}{\sqrt{\# S_n(o)}}\sum_{x  \in S_n(o)}  \delta_x, \quad \forall n \ge 0.
\end{align}

\begin{lemma}\label{lem-inv}
For any $o\in V_\kappa$, the subspace $\ell^2(V_\kappa)_{\rad(o)}$ is a common invariant  subspace of all operators in $B_\Gamma(\ell^2(V_\kappa))$.
\end{lemma}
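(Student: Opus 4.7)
The strategy is to view $\ell^2(V_\kappa)_{\rad(o)}$ as the fixed-point subspace of a natural unitary representation of the stabilizer subgroup $\Gamma_o := \{\gamma \in \Gamma : \gamma(o) = o\}$, and then use $\Gamma$-equivariance to deduce that any $K \in B_\Gamma(\ell^2(V_\kappa))$ commutes with this representation, hence preserves its fixed vectors.

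More precisely, I would set $U_\gamma \delta_x := \delta_{\gamma(x)}$, which extends to a unitary representation of $\Gamma$ on $\ell^2(V_\kappa)$. The $\Gamma$-equivariance of the kernel $K = T_\alpha$, i.e.\ $K(\gamma(x),\gamma(y)) = K(x,y)$, translates to the operator identity $U_\gamma K U_\gamma^* = K$ for every $\gamma \in \Gamma$. Consequently, $K$ commutes with $U_\gamma$ for every $\gamma \in \Gamma_o$, and therefore leaves the fixed-point subspace $\{\eta \in \ell^2(V_\kappa) : U_\gamma \eta = \eta \text{ for all } \gamma \in \Gamma_o\}$ invariant. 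To finish, I would identify this fixed-point subspace with $\ell^2(V_\kappa)_{\rad(o)}$.

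The identification rests on the following geometric fact: for each $n \ge 0$, the stabilizer $\Gamma_o$ acts transitively on the sphere $S_n(o)$. This is the only step that is not purely formal; it is a standard feature of regular trees and can be proved by induction on $n$, using the freedom to permute the $\kappa+1$ neighbors of $o$ and, for any vertex $x$ with $d(x,o)=n \ge 1$, the freedom to permute the $\kappa$ children of the parent of $x$ (the neighbors of that parent other than the one lying on the geodesic back to $o$). Once transitivity on each $S_n(o)$ is established, a vector $\eta = \sum_x \eta_x \delta_x$ is $\Gamma_o$-invariant if and only if $\eta_x$ depends only on $d(x,o)$, which is exactly the definition of a radial vector; equivalently, the orthonormal basis $(f_n)_{n \ge 0}$ in \eqref{onb-f-n} spans the fixed-point subspace.

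The main (and only real) obstacle is thus the transitivity of $\Gamma_o$ on spheres; everything else is a short equivariance computation. As a sanity check, one can also give a direct proof: writing $(Kf_n)(x) = \tfrac{1}{\sqrt{\# S_n(o)}} \sum_{y \in S_n(o)} \alpha(d(x,y))$ and using a $\gamma \in \Gamma_o$ with $\gamma(x) = x'$ (whenever $d(x,o) = d(x',o)$), the transitivity of $\Gamma_o$ on $S_n(o)$ ensures that $\gamma$ permutes $S_n(o)$, yielding $(Kf_n)(x) = (Kf_n)(x')$. This shows $Kf_n \in \ell^2(V_\kappa)_{\rad(o)}$ for every $n$, and hence the invariance of the whole subspace.
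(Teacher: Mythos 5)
Your proposal is correct, but it takes a genuinely different route from the paper's. The paper obtains Lemma \ref{lem-inv} as a byproduct of its $C^*$-algebra machinery: it observes that $\widehat{P_n} = \delta_n/\#S_n(o)$, hence $f_n = \sqrt{\#S_n(o)}\, T[P_n](\cdot,o)$, and then invokes the multiplicativity $T[\varphi]T[P_n] = T[\varphi P_n]$ from Theorem \ref{thm-alg-iso} (together with Proposition \ref{thm-bdd}, which writes every element of $B_\Gamma(\ell^2(V_\kappa))$ as some $T[\varphi]$) to compute $(T[\varphi]f_n)(x) = \sqrt{\#S_n(o)}\,\widehat{\varphi P_n}(d(x,o))$, a value that manifestly depends only on $d(x,o)$. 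Your argument is instead representation-theoretic: equivariance of the kernel is exactly the operator identity $U_\gamma K U_\gamma^{*} = K$, so $K$ lies in the commutant of the restriction to $\Gamma_o$ of the permutation representation and therefore preserves its fixed-point subspace, which coincides with $\ell^2(V_\kappa)_{\rad(o)}$ once one knows that $\Gamma_o$ acts transitively on each sphere $S_n(o)$. Note that this transitivity is precisely the fact \eqref{exchange-stab} that the paper itself invokes without proof in its proof of Lemma \ref{lem-alg}, so you are importing no geometric input beyond what the paper already assumes, and your inductive sketch of it is standard and adequate. Each route buys something: yours is more elementary, logically independent of Theorem \ref{thm-alg-iso} and Proposition \ref{thm-bdd} (so the lemma could be established much earlier), and extends verbatim to arbitrary bounded equivariant operators, including the vector-valued block-Toeplitz setting of the later proposition; the paper's computation, on the other hand, additionally yields the explicit action of $T[\varphi]$ on the basis $(f_n)$ --- the formula \eqref{T-varphi-f-n} --- which is reused immediately in the proof of Theorem \ref{prop-inv} to identify $P_{\rad(o)}T[\varphi]P_{\rad(o)}$ with the multiplication operator $M_\varphi$. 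Your closing ``sanity check'' is in fact a complete direct proof in its own right, valid for any bounded $T_\alpha$ without reference to the symbol $\varphi$, and is arguably the shortest correct argument available.
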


\begin{theorem}\label{prop-inv}
For any $o \in V_\kappa$, the following map 
\begin{align}\label{def-res}
\begin{array}{cccc}
j_o: & B_\Gamma(\ell^2(V_\kappa))  &\rightarrow &  B(\ell^2(V_\kappa)_{\rad(o)})
\vspace{2mm}
\\
& T[\varphi] & \mapsto & P_{\rad(o)} T[\varphi] P_{\rad(o)}
\end{array}
\end{align}
defines an isometric embedding of $C^*$-algebras. Furthermore, we have the following commutative diagram simultaneously for all $\varphi \in L^\infty(\Pi_\kappa)$: 
\begin{align}\label{CD-unitary}
\begin{CD}
\ell^2(V_\kappa)_{\rad(o)} @> P_{\rad(o)} T[\varphi] P_{\rad(o)}>>   \ell^2(V_\kappa)_{\rad(o)}
\\
@V{U_o}V{\simeq }V     @V{U_o}V{\simeq}V
\\
 L^2(\Pi_\kappa) @>\qquad M_\varphi \qquad >> L^2(\Pi_\kappa)
\end{CD},
\end{align}
where $M_\varphi$ is the operator of multiplication defined by 
\[
M_\varphi(\psi) = \varphi \psi, \quad \text{for all $\psi \in L^2(\Pi_\kappa)$}
\]
 and $U_o: \ell^2(V_\kappa)_{\rad(o)}\rightarrow L^2(\Pi_\kappa)$ is the unitary operator determined by 
\begin{align}\label{def-U-o}
U_o \left( \frac{\mathds{1}_{S_n(o)}}{\sqrt{\# S_n(o)}} \right) = \frac{P_n}{\| P_n\|_{L^2(\Pi_\kappa)}}, \quad  \forall n \ge 0. 
\end{align}
\end{theorem}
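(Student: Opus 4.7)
The plan is to first establish the commutative diagram \eqref{CD-unitary}, from which the remaining assertions about $j_o$ will follow essentially as corollaries. The single computational input is the identity
\[
T[P_n]\,\delta_o \;=\; \|P_n\|_{L^2(\Pi_\kappa)}^2 \,\mathds{1}_{S_n(o)},
\]
immediate from \eqref{def-hat}, \eqref{def-T-varphi} and the orthogonality relation $\widehat{P_n}(m)=\int P_m P_n\,d\Pi_\kappa=\|P_n\|^2\,\delta_{m,n}$. Written in the orthonormal basis $\{f_n\}$ of $\ell^2(V_\kappa)_{\rad(o)}$ and combined with the combinatorial matching
\[
\#S_n(o)\cdot \|P_n\|_{L^2(\Pi_\kappa)}^2 \;=\; 1,
\]
this becomes the cleaner $T[P_n]f_0=\|P_n\|_{L^2(\Pi_\kappa)}\,f_n$. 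The matching identity comes from comparing $\#S_n(o)=(\kappa+1)\kappa^{n-1}$ for $n\ge 1$ against the inductive values of $\|P_n\|^2$ forced by the Cartier-Dunau recurrence \eqref{cartier-dunau}; this elementary calculation is the only piece of number-crunching in the proof.

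With these identities in hand, the diagram is verified by checking it on the basis $\{f_n\}$. For $f_0=\delta_o$, one expands $T[\varphi]\,\delta_o=\sum_n \widehat{\varphi}(n)\,\mathds{1}_{S_n(o)}$ and applies $U_o$; the matching identity collapses the result to $\sum_n \widehat{\varphi}(n)\,\|P_n\|^{-2}_{L^2(\Pi_\kappa)}\,P_n$, which is precisely the Fourier expansion of $\varphi$ in the orthogonal basis $\{P_n\}$ of $L^2(\Pi_\kappa)$, so $U_o(T[\varphi]f_0)=\varphi=M_\varphi(U_o f_0)$. For general $n\ge 1$, I would use $f_n=T[P_n]f_0/\|P_n\|_{L^2(\Pi_\kappa)}$ together with the multiplicativity $T[\varphi]\,T[P_n]=T[\varphi P_n]$ furnished by Theorem \ref{thm-alg-iso} to reduce the computation to the already handled $f_0$ case applied to the bounded function $\varphi P_n$ (which is bounded because $P_n$ is continuous on the compact support of $\Pi_\kappa$).

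Once the diagram is in place, $j_o(T[\varphi])$ is unitarily conjugate via $U_o$ to the multiplication operator $M_\varphi$ on $L^2(\Pi_\kappa)$, whose norm equals $\|\varphi\|_{L^\infty(\Pi_\kappa)}$; together with $\|T[\varphi]\|=\|\varphi\|_{L^\infty(\Pi_\kappa)}$ from Theorem \ref{thm-alg-iso}, this forces $j_o$ to be norm-preserving. Multiplicativity and $*$-preservation of $j_o$ then follow from the observation that Lemma \ref{lem-inv} applied simultaneously to $T[\varphi]$ and to its adjoint $T[\overline{\varphi}]$ makes $\ell^2(V_\kappa)_{\rad(o)}$ a \emph{reducing} subspace, so $P_{\rad(o)}$ commutes with every element of $B_\Gamma(\ell^2(V_\kappa))$ and compressions factor across products and adjoints. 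The main obstacle I anticipate is securing the exact normalization $\#S_n(o)\cdot\|P_n\|_{L^2(\Pi_\kappa)}^2=1$ on the nose; this is the hinge that turns $U_o$ into an isometry and makes the diagram commute, and it is precisely what distinguishes the specific asymmetric coefficients in \eqref{cartier-dunau} from an arbitrary three-term recurrence.
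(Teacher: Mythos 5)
Your proposal is correct and takes essentially the same route as the paper: the paper likewise verifies the diagram on the basis $(f_n)_{n\ge 0}$ using $f_n=\sqrt{\# S_n(o)}\,T[P_n](\cdot,o)$ (equivalent to your $T[P_n]f_0=\|P_n\|_{L^2(\Pi_\kappa)}f_n$), the normalization $\|P_n\|_{L^2(\Pi_\kappa)}^2=1/\# S_n(o)$ (its Lemma \ref{lem-L2-norm}), and the multiplicativity $T[\varphi]T[P_n]=T[\varphi P_n]$ supplied by Theorem \ref{thm-alg-iso}, with the isometric embedding then read off from the unitary equivalence $j_o(T[\varphi])=U_o^* M_\varphi U_o$. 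Your reduction of the general $f_n$ case to the $f_0$ case and your explicit reducing-subspace remark for multiplicativity of $j_o$ are only cosmetic reorganizations of the same argument.
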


The following result is an immediate consequence of Theorem \ref{prop-inv}. 
\begin{corollary}\label{cor-radial}
For any  operator  $A \in B_\Gamma(\ell^2(V_\kappa))$ and any vertex $o \in V_\kappa$,  we have $\| A\| = \| A P_{\rad(o)}\|$. That is, 
\begin{align}\label{norm-eq-rad}
\sup_{\eta \in \ell^2(V_\kappa) \setminus \{0\}} \frac{\| A  \eta \|}{ \| \eta\|}= \sup_{\xi \in \ell^2(V_\kappa)_{\rad(o)} \setminus \{0\}} \frac{\| A  \xi \|}{ \| \xi\|}.
\end{align}
\end{corollary}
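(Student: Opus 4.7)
The plan is to deduce \eqref{norm-eq-rad} by identifying both sides with $\|\varphi\|_{L^\infty(\Pi_\kappa)}$ for the unique $\varphi \in L^\infty(\Pi_\kappa)$ such that $A = T[\varphi]$, whose existence is guaranteed by the surjectivity part of Theorem~\ref{thm-alg-iso}. Since $\iota$ is a $C^*$-algebra isomorphism and hence automatically isometric, the left-hand side of \eqref{norm-eq-rad} is $\|A\| = \|\varphi\|_{L^\infty(\Pi_\kappa)}$, and this takes care of one side immediately.

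For the right-hand side, I first observe that the supremum in question is, by definition of the operator norm, exactly $\|A P_{\rad(o)}\|_{B(\ell^2(V_\kappa))}$. I would then invoke Lemma~\ref{lem-inv}, which asserts that $A$ preserves $\ell^2(V_\kappa)_{\rad(o)}$; this implies $A P_{\rad(o)} = P_{\rad(o)} A P_{\rad(o)}$ as operators on $\ell^2(V_\kappa)$, and so $\|A P_{\rad(o)}\|$ equals the operator norm of $j_o(A)$ acting on the invariant subspace $\ell^2(V_\kappa)_{\rad(o)}$. Finally, the commutative diagram \eqref{CD-unitary} in Theorem~\ref{prop-inv} realizes $j_o(A)$ as $U_o^* M_\varphi U_o$, so this norm equals $\|M_\varphi\|_{B(L^2(\Pi_\kappa))} = \|\varphi\|_{L^\infty(\Pi_\kappa)}$, matching the left-hand side.

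There is essentially no obstacle: the corollary is a bookkeeping consequence of chaining the isometry in Theorem~\ref{thm-alg-iso}, the invariance in Lemma~\ref{lem-inv}, and the unitary equivalence in Theorem~\ref{prop-inv}. The only subtle point worth flagging is the invariance step: without Lemma~\ref{lem-inv}, the operators $A P_{\rad(o)}$ and $P_{\rad(o)} A P_{\rad(o)}$ could differ in norm, and the identification of the right-hand side of \eqref{norm-eq-rad} with $\|j_o(A)\|$ would no longer go through.
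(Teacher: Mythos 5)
Your proposal is correct and follows essentially the same route as the paper, which presents Corollary~\ref{cor-radial} as an immediate consequence of Theorem~\ref{prop-inv}: your chain $\|A\| = \|\varphi\|_{L^\infty(\Pi_\kappa)} = \|M_\varphi\| = \|j_o(A)\| = \|AP_{\rad(o)}\|$ simply unwinds the isometry of $j_o$ into its two constituent isometries from Theorems~\ref{thm-alg-iso} and~\ref{prop-inv}. You are also right to flag Lemma~\ref{lem-inv} as the point guaranteeing $AP_{\rad(o)} = P_{\rad(o)}AP_{\rad(o)}$, which is exactly what makes the identification of the right-hand side of \eqref{norm-eq-rad} with $\|j_o(A)\|$ legitimate.
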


\begin{remark*}
The equality \eqref{norm-eq-rad} seems to be non-trivial even in the case $\kappa =1$: it holds only for infinite radial symmetric vectors. More precisely, 
set $B_n(o): = \{x \in V_\kappa| d(x, o) \le n\}$, consider $\ell^2(B_n(o))$ as a subspace of $\ell^2(V_\kappa)$ and set
\[
\ell^2(B_n(o))_{\rad(o)}  = \ell^2(B_n(o)) \cap  \ell^2(V_\kappa)_{\rad(o)}.
\]
Then for a general bounded operator $A\in B_\Gamma(\ell^2(V_\kappa))$ and a finite $n$, the following equality is in general not true:
\[
\sup_{\eta \in \ell^2(B_n(o)) \setminus \{0\}} \frac{\| A  \eta \|}{ \| \eta\|}= \sup_{\xi \in \ell^2(B_n(o))_{\rad(o)} \setminus \{0\}} \frac{\| A  \xi \|}{ \| \xi\|}.
\]
Indeed, consider  the case $\kappa =1$ and Toeplitz operators on $\ell^2(\Z)$. Let $\alpha: \N_0 \rightarrow \C$ be the function $\alpha  = \delta_0 - b \delta_2$  (here we assume that $0< b < 1$). Then  $T_\alpha$ is given by 
\[
T_\alpha: = \left[
\begin{array}{ccccccc}
\ddots& \ddots& \ddots& \ddots& \ddots&  \ddots& 
\\ 
-b& 0 & 1 & 0 & - b&0 & 0
\\
0 & -b& 0 & 1 & 0 & -b& 0
\\
0& 0 & - b & 0 & 1 & 0 & -b
\\
 & \ddots& \ddots& \ddots& \ddots& \ddots& \ddots
\end{array}
\right].
\]
For $n = 1$, we have $B_1(0) = \{-1, 0, 1\}$. Take any $\eta = \eta_{-1} \delta_{-1} + \eta_0 \delta_0 + \eta_1 \delta_1  \in \ell^2(\{-1, 0, 1\})$, we have
\begin{align*}
\|T_\alpha \eta \|^2 = (2b^2 +1)| \eta_{0}|^2 + (2b^2 +1)| \eta_{-1}|^2 + (2b^2 +1)| \eta_{1}|^2 - 2b \eta_{-1}\overline{\eta_1}- 2b \overline{\eta_{-1}} \eta_1.
\end{align*}
We can obtain the following strict inequality
\[
\sup_{\eta \in \ell^2(\{-1, 0, 1\}) \setminus \{0\}} \frac{\| T\eta\|}{\| \eta\|} = \sqrt{2b^2 + 2 b + 1} > \sqrt{2 b^2 +1} = \sup_{\eta \in \ell^2(\{-1, 0, 1\})\setminus \{0 \} \atop \eta_{-1} = \eta_1 } \frac{\| T\eta\|}{\| \eta\|}. 
\]
\end{remark*}

\bigskip

Theorem \ref{prop-inv} has the following vector-valued extension.  Let $\mathcal{H}$ be any complex Hilbert space and let $\ell^2(V_\kappa; \mathcal{H})$ be the Hilbert space defined by
\[
\ell^2(V_\kappa; \mathcal{H}): = \Big\{ \sum_{x \in V_\kappa}\xi_x \delta_x \Big| \text{$\xi_x\in \mathcal{H}$ and $\sum_{x \in V_\kappa} \| \xi_x\|_\mathcal{H}^2 <\infty$}\Big\}.
\]
Recall that all bounded operators on $\ell^2(V_\kappa; \mathcal{H})$ can be written in block forms $[A(x, y)]_{x, y\in V_\kappa}$ with each block $A(x, y) \in B(\mathcal{H})$. Let $B_\Gamma(\ell^2(V_\kappa; \mathcal{H}))$ denote the space of bounded operators $[A(x, y)]_{x, y\in V_\kappa}$ on $\ell^2(V_\kappa; \mathcal{H})$ such that 
\[
A(x, y) = A(\gamma (x), \gamma (y)), \quad \forall x, y \in V_\kappa, \gamma \in \Gamma. 
\]
Such operators must be of radial Toeplitz block forms: 
\[
A(x, y) = \Theta(d(x, y)) = : T_\Theta(x, y), \quad \forall x, y \in V_\kappa,
\]
where $\Theta: \N_0 \rightarrow B(\mathcal{H})$ is a $B(\mathcal{H})$-valued function. 

Fix any vertex $o \in V_\kappa$. Similar to the definition of $\ell^2(V_\kappa)_{\rad (o)}$, we define  the subspace $\ell^2(V_\kappa; \mathcal{H})_{\rad (o)} \subset \ell^2(V_\kappa; \mathcal{H})$ of radial vectors with respect to the vertex $o$. 

\begin{proposition}
For any  vertex $o\in V_\kappa$, the subspace $\ell^2(V_\kappa; \mathcal{H})_{\rad(o)}$ is a common invariant  subspace of all operators in $B_\Gamma(\ell^2(V_\kappa; \mathcal{H}))$. Moreover, we have the following commutative diagram simultaneously for all $\Phi \in L^\infty(\Pi_\kappa; B(\mathcal{H}))$: 
\begin{align*}
\begin{CD}
\ell^2(V_\kappa; \mathcal{H})_{\rad(o)} @> (P_{\rad(o)} \otimes I_H) T[\Phi] ( P_{\rad(o)} \otimes I_H)>>   \ell^2(V_\kappa; \mathcal{H})_{\rad(o)}
\\
@V{U_o \otimes I_H}V{\simeq }V     @V{U_o \otimes I_H}V{\simeq}V
\\
 L^2(\Pi_\kappa; \mathcal{H}) @>\qquad  \qquad M_\Phi \qquad  \qquad >> L^2(\Pi_\kappa; \mathcal{H})
\end{CD},
\end{align*}
where $I_H$ is the identity operator on $\mathcal{H}$ and $M_\Phi$ is the operator  defined by 
\[
M_\Phi(\Psi) = \Phi \Psi, \quad \text{for all $\Psi \in L^2(\Pi_\kappa; \mathcal{H})$}
\]
 and $U_o: \ell^2(V_\kappa)_{\rad(o)}\rightarrow L^2(\Pi_\kappa)$ is the unitary operator defined as in \eqref{def-U-o}. 
\end{proposition}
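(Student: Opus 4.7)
The plan is to reduce the statement to the scalar result Theorem \ref{prop-inv} via the canonical unitary identifications
\[
\ell^2(V_\kappa;\mathcal{H}) \simeq \ell^2(V_\kappa)\otimes \mathcal{H}, \qquad \ell^2(V_\kappa;\mathcal{H})_{\rad(o)} \simeq \ell^2(V_\kappa)_{\rad(o)} \otimes \mathcal{H}, \qquad L^2(\Pi_\kappa;\mathcal{H})\simeq L^2(\Pi_\kappa)\otimes \mathcal{H},
\]
under which the orthogonal projection onto the radial subspace becomes $P_{\rad(o)}\otimes I_\mathcal{H}$ and the unitary of the diagram is $U_o \otimes I_\mathcal{H}$.

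First I would establish the invariance of the radial subspace. Let $\Gamma_o \subset \Gamma$ be the stabilizer of $o$. Since $\Gamma_o$ acts transitively on each sphere $S_n(o)$, a vector $\xi = \sum_y \xi_y \delta_y$ in $\ell^2(V_\kappa;\mathcal{H})$ lies in $\ell^2(V_\kappa;\mathcal{H})_{\rad(o)}$ if and only if $\xi_{\gamma y} = \xi_y$ for all $\gamma\in \Gamma_o$ and all $y\in V_\kappa$. For such $\xi$ and any $\gamma \in \Gamma_o$, the $\Gamma$-equivariance of the block kernel $T_\Theta(x,y)=\Theta(d(x,y))$ gives
\[
(T_\Theta \xi)_{\gamma x} = \sum_y \Theta(d(\gamma x, y))\,\xi_y = \sum_z \Theta(d(x,z))\,\xi_{\gamma z} = (T_\Theta \xi)_x,
\]
so $T_\Theta \xi$ is again radial. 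This is the vector-valued analogue of Lemma \ref{lem-inv}.

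Next I would establish the commutative diagram. Define the $B(\mathcal{H})$-valued Fourier coefficients $\widehat{\Phi}(n) := \int_\R P_n(t)\Phi(t)\,d\Pi_\kappa(t)$ (weak integral), and set $T[\Phi]:=T_{\widehat{\Phi}}$. To identify the corner compression $(P_{\rad(o)}\otimes I_\mathcal{H})T[\Phi](P_{\rad(o)}\otimes I_\mathcal{H})$ with $(U_o\otimes I_\mathcal{H})^* M_\Phi (U_o \otimes I_\mathcal{H})$, it suffices by polarization to compare sesquilinear forms on simple tensors $\eta\otimes h,\ \zeta\otimes h'$ with $\eta,\zeta \in \ell^2(V_\kappa)_{\rad(o)}$ and $h,h'\in \mathcal{H}$. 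The key reduction is to introduce the scalar symbol
\[
\varphi_{h,h'}(t) := \langle \Phi(t)h, h'\rangle_\mathcal{H} \in L^\infty(\Pi_\kappa),
\]
whose scalar Fourier coefficients satisfy $\widehat{\varphi_{h,h'}}(n) = \langle \widehat{\Phi}(n)h,h'\rangle_\mathcal{H}$. A direct expansion then yields
\[
\langle T[\Phi](\eta\otimes h), \zeta\otimes h'\rangle_{\ell^2(V_\kappa;\mathcal{H})} = \langle T[\varphi_{h,h'}]\,\eta,\zeta\rangle_{\ell^2(V_\kappa)},
\]
and Theorem \ref{prop-inv} applied to $\varphi_{h,h'}$ rewrites the right-hand side as $\int \varphi_{h,h'}(t)(U_o \eta)(t)\overline{(U_o\zeta)(t)}\,d\Pi_\kappa(t)$, which is precisely $\langle M_\Phi((U_o\eta)\otimes h), (U_o\zeta)\otimes h'\rangle_{L^2(\Pi_\kappa;\mathcal{H})}$, as required.

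The only mild technical point, and the one I would be most careful about, is the well-definedness and boundedness of $T[\Phi]$ for a general $\Phi \in L^\infty(\Pi_\kappa; B(\mathcal{H}))$: since $B(\mathcal{H})$ need not be separable, the coefficients $\widehat{\Phi}(n)$ must be defined in the weak sense, and the estimate $\|T[\Phi]\| \le \|\Phi\|_{L^\infty(\Pi_\kappa;B(\mathcal{H}))}$ is obtained by testing against vectors $h,h'$ and invoking the scalar isometry of Theorem \ref{thm-alg-iso} on each symbol $\varphi_{h,h'}$. No genuine obstacle is expected beyond this bookkeeping, since the argument is essentially a tensor-product lifting of the scalar theorem.
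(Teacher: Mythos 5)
The paper gives no proof of this proposition at all: it is stated as the ``vector-valued extension'' of Lemma \ref{lem-inv} and Theorem \ref{prop-inv}, with the details left to the reader. So your proposal is supplying an argument the paper leaves implicit, and in its two main steps it is correct. Your invariance argument is in fact different from, and arguably better suited than, the paper's scalar proof of Lemma \ref{lem-inv}: the paper computes $T[\varphi]f_n$ explicitly via the multiplicativity $T[\varphi]T[P_n]=T[\varphi P_n]$, a method which in the vector-valued setting would only cover operators of the form $T[\Phi]$ --- and the identification of \emph{all} of $B_\Gamma(\ell^2(V_\kappa;\mathcal{H}))$ with such operators is exactly the content of the (likewise unproved) corollary that follows, so that route would be circular for the first assertion. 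Your observation that the radial vectors are precisely the fixed points of the stabilizer $\Gamma_o$, which acts transitively on each sphere $S_n(o)$, and that any $\Gamma$-equivariant bounded operator commutes with the $\Gamma_o$-action, gives invariance for every operator in $B_\Gamma(\ell^2(V_\kappa;\mathcal{H}))$ directly. The reduction of the diagram to the scalar Theorem \ref{prop-inv} through the symbols $\varphi_{h,h'}(t)=\langle\Phi(t)h,h'\rangle_\mathcal{H}$, verified on the total set of simple tensors and extended using the boundedness of $M_\Phi$, is also sound.

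The one step that does not work as you state it is the norm estimate $\|T[\Phi]\|\le\|\Phi\|_{L^\infty(\Pi_\kappa;B(\mathcal{H}))}$ ``by testing against vectors $h,h'$.'' Testing yields only
\[
|\langle T[\Phi](\eta\otimes h),\zeta\otimes h'\rangle|\le\|\Phi\|_\infty\,\|\eta\|\,\|\zeta\|\,\|h\|\,\|h'\|,
\]
a bound of the sesquilinear form on \emph{simple tensors}, and such bounds do not imply boundedness on the Hilbert-space tensor product: on $\ell^2\otimes\ell^2$ the functional $\eta\otimes h\mapsto\sum_i\eta_i h_i$ is contractive on simple tensors yet evaluates to $\sqrt{N}$ on the unit vector $N^{-1/2}\sum_{i\le N}e_i\otimes e_i$. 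For the proposition itself this is harmless --- the diagram concerns only the compression to the radial subspace, which your form computation identifies with the bounded operator $(U_o\otimes I_H)^{*}M_\Phi(U_o\otimes I_H)$, and the invariance assertion concerns operators bounded by hypothesis --- but if you want $T[\Phi]\in B_\Gamma(\ell^2(V_\kappa;\mathcal{H}))$ for general $\Phi$ (as the paper's subsequent corollary requires), a genuine argument is needed. For instance, one can run the vector-valued analogue of the positivity route of Proposition \ref{thm-bdd}: for $\Phi(t)\ge 0$ the block kernel is positive definite because, for a.e.\ $t$, writing the positive definite scalar kernel $(x,y)\mapsto P_{d(x,y)}(t)$ as a Gram matrix $\langle u_x,u_y\rangle$ and factoring $\Phi(t)=B^{*}B$ gives $\sum_{x,y}P_{d(x,y)}(t)\langle\Phi(t)h_y,h_x\rangle=\|\sum_x u_x\otimes Bh_x\|^2\ge 0$, after which the $\lambda I\pm T[\Phi]$ trick of Lemma \ref{lem-norm-pos} applies; alternatively, one may note that the normal unital $*$-isomorphism of Theorem \ref{thm-alg-iso} tensorizes isometrically with $B(\mathcal{H})$.
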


\begin{corollary}
A function  $\Theta: \N_0 \rightarrow B(\mathcal{H})$ induces a bounded radial Toeplitz operator $T_\Theta$ on $\ell^2(V_\kappa; \mathcal{H})$ if and only if there exists $\Phi \in L^\infty(\Pi_\kappa; B(\mathcal{H}))$ such that 
\[
\Theta(n) = \int_\R P_n(t) \Phi(t) d\Pi_\kappa(t), \quad \forall n \ge 0.
\]
Moreover, in this case, the operator norm of $T_\Theta$ is given by
\[
\| T_\Theta\| = \|\Phi\|_{L^\infty(\Pi_\kappa; B(\mathcal{H}))}.
\]
\end{corollary}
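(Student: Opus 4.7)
The plan is to derive this corollary from the preceding Proposition, extending the scalar $C^*$-algebra isomorphism of Theorem \ref{thm-alg-iso} to the operator-valued setting.

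For the necessity direction, assume $T_\Theta$ is bounded on $\ell^2(V_\kappa; \mathcal{H})$ with norm $M$. For each pair $u, v \in \mathcal{H}$, the scalar radial Toeplitz kernel $\alpha_{u,v}(n) := \langle \Theta(n)u, v\rangle_\mathcal{H}$ arises as the compression of $T_\Theta$ to the two isometric copies of $\ell^2(V_\kappa)$ generated by $\{\delta_x \otimes u\}_x$ and $\{\delta_x \otimes v\}_x$, hence defines a bounded operator with norm at most $M\|u\|\|v\|$. Theorem \ref{thm-alg-iso} then yields a unique $\varphi_{u,v} \in L^\infty(\Pi_\kappa)$ with $\widehat{\varphi_{u,v}}(n) = \alpha_{u,v}(n)$ and $\|\varphi_{u,v}\|_\infty \le M\|u\|\|v\|$. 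Taking a countable dense subset of $\mathcal{H}$ and discarding a single $\Pi_\kappa$-null set, the sesquilinearity of $(u,v) \mapsto \varphi_{u,v}$ (forced by uniqueness) together with the uniform bound produces, for a.e.\ $t$, a bounded sesquilinear form, and hence a unique $\Phi(t) \in B(\mathcal{H})$ with $\|\Phi(t)\|_{B(\mathcal{H})} \le M$ and $\langle \Phi(t)u, v\rangle_\mathcal{H} = \varphi_{u,v}(t)$. The resulting $\Phi$ lies in $L^\infty(\Pi_\kappa; B(\mathcal{H}))$ and satisfies the required integral formula by testing against $u$ and $v$.

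For the sufficiency direction, given $\Phi \in L^\infty(\Pi_\kappa; B(\mathcal{H}))$, the elementary-tensor case $\Phi = \varphi \otimes A$ is immediate: $\Theta(n) = \widehat{\varphi}(n)\,A$, so $T_\Theta = T[\varphi] \otimes A$, which is bounded with norm $\|\varphi\|_\infty \|A\| = \|\Phi\|_\infty$ by Theorem \ref{thm-alg-iso}. For a finite linear combination $\Phi = \sum_i \varphi_i \otimes A_i$ we have $T_\Theta = \sum_i T[\varphi_i] \otimes A_i$, and since the $T[\varphi_i]$ lie in the commutative $C^*$-algebra $B_\Gamma(\ell^2(V_\kappa))$ (Lemma \ref{lem-alg}), the norm is controlled by $\|\Phi\|_\infty$ through the identification $B_\Gamma(\ell^2(V_\kappa)) \bar\otimes B(\mathcal{H}) \simeq L^\infty(\Pi_\kappa) \bar\otimes B(\mathcal{H}) \simeq L^\infty(\Pi_\kappa; B(\mathcal{H}))$, which is legitimate because $L^\infty(\Pi_\kappa)$ is abelian (so minimal and maximal $C^*$-tensor norms coincide). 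For arbitrary $\Phi$, one approximates in the $\sigma$-weak topology, giving a bounded $T_\Theta$ with $\|T_\Theta\| \le \|\Phi\|_\infty$.

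Finally, the equality $\|T_\Theta\| = \|\Phi\|_\infty$ follows from the preceding Proposition: on the radial subspace $\ell^2(V_\kappa; \mathcal{H})_{\rad(o)}$, the operator $T_\Theta$ is unitarily equivalent to the multiplication operator $M_\Phi$ on $L^2(\Pi_\kappa; \mathcal{H})$, whose operator norm is the standard $\esssup_t \|\Phi(t)\|_{B(\mathcal{H})} = \|\Phi\|_{L^\infty(\Pi_\kappa; B(\mathcal{H}))}$. Combined with the previous step, the norms match. The main obstacle is the sufficiency argument: upgrading the norm bound from tensor-decomposable symbols to arbitrary elements of $L^\infty(\Pi_\kappa; B(\mathcal{H}))$, since simple $B(\mathcal{H})$-valued functions are not norm-dense. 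This forces one into the $\sigma$-weak topology or, equivalently, into the commutative-tensor-product identification, which itself depends crucially on the commutativity established in Lemma \ref{lem-alg}.
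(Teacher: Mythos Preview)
The paper states both this corollary and the preceding vector-valued Proposition without proof, presenting them as routine extensions of the scalar theory; there is no argument in the paper to compare yours against line by line. Your proposal is essentially correct and is a reasonable way to supply the omitted details, though it takes a somewhat different route from a literal operator-valued transcription of the scalar proofs in \S3--4.

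For necessity, your scalar-compression argument is clean and sidesteps any need for an operator-valued analogue of Arnaud's Theorem~\ref{thm-arn}: reducing to Theorem~\ref{thm-alg-iso} via the family $\varphi_{u,v}$ and reassembling $\Phi(t)$ from the bounded sesquilinear form is standard and correct. One small caveat: the ``countable dense subset'' step tacitly assumes $\mathcal{H}$ separable. For general $\mathcal{H}$ you should read $L^\infty(\Pi_\kappa; B(\mathcal{H}))$ as the von~Neumann tensor product $L^\infty(\Pi_\kappa)\,\bar\otimes\, B(\mathcal{H})$ (weak-$*$ measurable symbols), in which case weak measurability of $t\mapsto \langle \Phi(t)u,v\rangle$ is exactly what you produce.

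For sufficiency, the tensor-product route is the right one, but the ``main obstacle'' you flag dissolves once you make one observation explicit: $B_\Gamma(\ell^2(V_\kappa))$ is precisely the commutant $\{U_\gamma:\gamma\in\Gamma\}'$ and hence a von~Neumann algebra, so the $*$-isomorphism $\iota$ of Theorem~\ref{thm-alg-iso} is automatically normal (uniqueness of the predual). Then $\iota\,\bar\otimes\,\mathrm{id}$ extends to an isometric $*$-isomorphism $L^\infty(\Pi_\kappa)\,\bar\otimes\, B(\mathcal{H})\to B_\Gamma(\ell^2(V_\kappa))\,\bar\otimes\, B(\mathcal{H})$, and the tensor commutant theorem identifies the target with $\{U_\gamma\otimes I_\mathcal{H}\}' = B_\Gamma(\ell^2(V_\kappa;\mathcal{H}))$. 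This replaces the ad~hoc $\sigma$-weak approximation and gives both sufficiency and $\|T_\Theta\|\le\|\Phi\|_\infty$ at once.

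Your use of the Proposition for the reverse inequality $\|T_\Theta\|\ge\|M_\Phi\|=\|\Phi\|_\infty$ is exactly what the paper intends; combined with the above, the norm equality follows.
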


\subsection{$\Gamma$-invariant determinantal point processes on $V_\kappa$}

A direct application of our previous results is the classification of a rich family of $\Gamma$-invariant determinantal point processes on $V_\kappa$.

Let us briefly recall the basic materials in the theory of determinantal point processes on discrete sets.  Let $E$ be a countable discrete set, equipped with the counting measure. By a {\it configuration} on $E$, we mean a subset of $E$. Identify the space $\Conf(E)$ of configurations on $E$ with the set $\{0, 1\}^E$ and endow the product topology on the set $\{0, 1\}^E$.   By a {\it point process} on $E$, we mean a Borel probability measure on $\Conf(E)$. For further background on the general theory of point processes, see  Daley and Vere-Jones \cite{DV-1}, Kallenberg \cite{Kallenberg}.

A point process $\PP$ on $E$ is called a {\it determinantal point process} induced by a kernel $K: E\times E \rightarrow \C$ if for any finite subset $\Lambda \subset E$, 
\begin{align}\label{def-DPP}
\PP \Big( \Big\{\xi \in \Conf(E) \big| \xi \supset \Lambda \Big\}\Big) = \det (K(x, y))_{x, y \in \Lambda}.
\end{align}
In the above situation, $K$ is called a {\it correlation kernel} of the point process $\PP$. Clearly,  $\PP$ is uniquely determined by  the kernel $K$ and we denote it by $\PP_K$. Note that different correlation kernels may induce same determinantal point process. 

In the case of Hermitian kernels, a necessary and sufficient condition for a kernel to induce a determinantal point process is provided by Macchi-Soshnikov Theorem, which is also proved independently by Shirai-Takahashi. Here we only state this theorem in the case when $E$ is discrete. 

Recall that an operator $A\in B(\ell^2(E))$ is called positive  and is denoted as $A\ge 0$ if $\langle A v, v\rangle \ge 0$ for all $v \in \ell^2(E)$.
\begin{theorem}[See {\cite{DPP-M, DPP-S,ST-palm2, ST-palm}}]
Let $E$ be a countable discrete set. A Hermitian kernel $K: E\times E\rightarrow \C$ induces a determinantal point process on $E$ if and only if $K$ is the kernel of a {\it positive contractive} bounded operator on  $\ell^2(E)$. 
\end{theorem}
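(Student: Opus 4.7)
The plan is to prove the two implications separately.

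For \textbf{necessity}, suppose $\PP = \PP_K$ is a determinantal point process with Hermitian kernel $K$. The defining relation \eqref{def-DPP} forces each principal minor $\det(K(x,y))_{x,y\in\Lambda}$ ($\Lambda \subset E$ finite) to be a probability, hence non-negative; since non-negativity of all principal minors of a Hermitian operator characterizes positive semi-definiteness, we obtain $K \geq 0$ on $\ell^2(E)$. For the contraction bound, inclusion--exclusion identifies
$$\PP(\xi \cap \Lambda = \emptyset) = \sum_{S \subseteq \Lambda} (-1)^{|S|} \det(K(x,y))_{x,y\in S} = \det\bigl((I-K)(x,y)\bigr)_{x,y\in\Lambda},$$
which is also a probability, so every principal minor of $I - K$ is non-negative and $K \leq I$ follows.

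For \textbf{sufficiency}, I would construct $\PP_K$ in three stages. (i) If $K = P_V$ is the projection onto an $n$-dimensional subspace $V$ with orthonormal basis $\phi_1,\ldots,\phi_n$, define the probability measure on $n$-point configurations
$$\PP\bigl(\{x_1,\ldots,x_n\}\bigr) = \frac{1}{n!}\bigl|\det[\phi_i(x_j)]_{i,j=1}^n\bigr|^2,$$
and invoke the Cauchy--Binet formula to identify the $k$-point correlations with $\det(P_V(x,y))_{x,y\in\Lambda}$. (ii) For a finite-rank $K = \sum_i \lambda_i Q_i$ with $\lambda_i \in [0,1]$ and $Q_i$ mutually orthogonal rank-one projections, introduce independent Bernoullis $\varepsilon_i \sim \mathrm{Ber}(\lambda_i)$, set $K_\varepsilon := \sum_i \varepsilon_i Q_i$, and define $\PP_K$ to be the Bernoulli mixture $\int \PP_{K_\varepsilon}\, d\PP_\varepsilon$ of the projection DPPs from (i); Cauchy--Binet combined with the independence of the $\varepsilon_i$ gives the required identity $\E[\det(K_\varepsilon(x,y))_{x,y\in\Lambda}] = \det(K(x,y))_{x,y\in\Lambda}$.

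The \textbf{main obstacle} is stage (iii), the extension to a general bounded positive contraction $K$. I would compress: fix an exhausting sequence $\Lambda_n \uparrow E$, set $K_n := (K(x,y))_{x,y\in\Lambda_n}$ acting on $\ell^2(\Lambda_n)$, observe that $0 \leq K_n \leq I$, and construct $\PP_{K_n}$ on $\Conf(\Lambda_n) = \{0,1\}^{\Lambda_n}$ via the finite-rank construction. The marginal of any DPP onto a subset has kernel equal to the corresponding submatrix, so the family $\{\PP_{K_n}\}$ is projective, and the Kolmogorov extension theorem on the compact Polish space $\{0,1\}^E$ produces a Borel probability measure on $\Conf(E)$. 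The delicate point, and what makes this step the real heart of the proof, is checking that the limiting measure has correlation functions $\det(K(x,y))_{x,y\in\Lambda}$ for \emph{every} finite $\Lambda$; this reduces to noting that once $\Lambda \subseteq \Lambda_n$ the event $\{\xi \supset \Lambda\}$ is a cylinder set depending only on $\Lambda_n$-coordinates, and that the submatrix of $K_n$ on $\Lambda$ coincides with $(K(x,y))_{x,y\in\Lambda}$.
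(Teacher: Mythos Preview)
The paper does not prove this theorem; it is quoted as the Macchi--Soshnikov / Shirai--Takahashi theorem with references to \cite{DPP-M, DPP-S, ST-palm2, ST-palm} and used as a black box. There is therefore no paper proof to compare your proposal against.

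For what it is worth, your sketch is the standard argument and is essentially correct. One small logical point in the necessity direction: non-negativity of all principal minors of $K$ alone yields only $\langle Kv,v\rangle \ge 0$ for finitely supported $v$, which does not yet guarantee that $K$ defines a \emph{bounded} operator on $\ell^2(E)$. The right order is to first establish both quadratic-form inequalities $0 \le \langle Kv,v\rangle \le \|v\|^2$ on finitely supported vectors (using the two families of principal minors simultaneously), and then invoke the Cauchy--Schwarz inequality for the positive sesquilinear form $\langle K\cdot,\cdot\rangle$ to obtain $|\langle Kv,w\rangle| \le \|v\|\,\|w\|$; this is what produces the bounded extension satisfying $0 \le K \le I$. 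In the sufficiency direction your three-stage construction (finite-rank projections via the squared Slater determinant, Bernoulli mixing to reach finite-rank contractions, then Kolmogorov extension along an exhaustion) is exactly the Hough--Krishnapur--Peres--Vir\'ag approach alluded to in the paper.
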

The reader is also referred to Lyons \cite{Lyons-DPP-IHES} for a construction of determinantal point process using exterior algebra and to  Hough-Krishnapur-Peres-Vir{\'a}g \cite{DPP-HKPV} for an alternative method of recursive construction of determinantal point processes with Hermitian correlation kernels. In full generality of non-Hermitian kernels, it is still an open  problem  to give a necessary and sufficient condition on the kernel such that it induces a determinantal point process.

If the set $E$ carries some additional structure (for instance, $E$ is a group), then it is natural to consider determinantal point processes on $E$ arising from positive contractions which are equivariant under the group action of automorphisms of $E$.   For instance,  in  the  discrete case,  Shirai-Takahashi \cite{ST-palm} and   Lyons-Steif \cite{Lyons-stationary} have studied the  determinantal point processes on the group $\Z^d$ or more general countable discrete abelian groups with translation invariant kernels. Lyons-Thom \cite{Lyons-coupling} have studied, among other things, the invariant monotone coupling of two determinantal point processes arising from two equivariant positive contractions $Q_1 \le Q_2$ on the Cayley graph of a sofic group.  In the continuous case, Krishnapur \cite{Krishnapur09} has studied determinantal point processes arising from equivariant orthogonal projections related to holomorphic functions in Poincar\'e disk, the Riemann sphere and the complex plane.

An immediate consequence of Theorem \ref{thm-alg-iso} is the following
\begin{corollary}\label{cor-dpp}
The class of positive contractive radial Toeplitz operators on $\ell^2(V_\kappa)$  is given by 
\[
\Big\{T[\varphi] \Big|  \text{$\varphi \in L^\infty(\Pi_\kappa)$ and $0 \le \varphi \le 1$}\Big\}.
\]
Therefore,  the family of determinantal point processes on $V_\kappa$ induced by $\Gamma$-equivariant  positive contractions  is 
\[
\Big\{\PP_{T[\varphi]}\Big|  \text{$\varphi \in L^\infty(\Pi_\kappa)$ and $0 \le \varphi \le 1$}\Big\}.
\]
\end{corollary}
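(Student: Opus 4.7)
The plan is to read off Corollary \ref{cor-dpp} as a direct combination of Theorem \ref{thm-alg-iso} with the Macchi--Soshnikov theorem quoted just above the statement.

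First I would prove the description of positive contractive radial Toeplitz operators. By Theorem \ref{thm-alg-iso}, the map $\iota: \varphi \mapsto T[\varphi]$ is a $*$-isomorphism between the commutative $C^*$-algebras $L^\infty(\Pi_\kappa)$ and $B_\Gamma(\ell^2(V_\kappa))$. Any $*$-isomorphism of $C^*$-algebras is automatically isometric and order-preserving in both directions, so
\[
T[\varphi] \ge 0 \Longleftrightarrow \varphi \ge 0 \text{ $\Pi_\kappa$-a.e.}, \qquad \|T[\varphi]\| = \|\varphi\|_{L^\infty(\Pi_\kappa)}.
\]
Applying these two equivalences to $\varphi$ and to $1 - \varphi$ shows $0 \le T[\varphi] \le I$ iff $0 \le \varphi \le 1$ $\Pi_\kappa$-a.e., and the surjectivity of $\iota$ ensures that every positive contractive operator in $B_\Gamma(\ell^2(V_\kappa))$ arises this way. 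This establishes the first assertion.

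For the second assertion, I would combine this description with Macchi--Soshnikov. As explained in the introduction, a $\Gamma$-equivariant kernel on $V_\kappa$ is precisely a radial Toeplitz kernel, i.e.\ an element of $\mathcal{T}_\Gamma(V_\kappa)$. By Macchi--Soshnikov, a Hermitian such kernel induces a determinantal point process if and only if it is a positive contraction on $\ell^2(V_\kappa)$, hence if and only if it is of the form $T[\varphi]$ for some $\varphi \in L^\infty(\Pi_\kappa)$ with $0 \le \varphi \le 1$; the associated process is then $\PP_{T[\varphi]}$.

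No real obstacle is expected here: Theorem \ref{thm-alg-iso} has already done all the work. The only substantive thing to verify is that the $C^*$-algebra isomorphism $\iota$ respects the order and the norm that together define ``positive contractive,'' and this is automatic from abstract $C^*$-algebra theory.
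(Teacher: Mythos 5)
Your proposal is correct and matches the paper's own (implicit) argument: the paper presents this corollary as an immediate consequence of Theorem \ref{thm-alg-iso}, using exactly the facts you cite --- that the unital $*$-isomorphism $\iota$ transfers the conditions $0 \le T[\varphi] \le I$ to $0 \le \varphi \le 1$ $\Pi_\kappa$-a.e., together with the Macchi--Soshnikov theorem and the observation that $\Gamma$-equivariant kernels are precisely the radial Toeplitz ones. Nothing is missing.
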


For $\kappa =1$, the set $V_\kappa$ reduce to the set $\Z$ of integers.  Note that the automorphism group of the graph $\Z$ is generated by the translations and reflection. Positive contractive kernels on $\Z$ which are both translation and reflection invariant are given by the Fourier transform of even functions $\varphi: [-1/2, 1/2]\rightarrow [0,1]$. For instance, the discrete Dyson-sine kernels fall into this class: 
\[
S_\alpha(m, n) := \frac{\sin (\alpha \pi(m-n))}{\pi(m-n)}, \quad m, n \in \Z,
\]
with diagonal understood as $\alpha$ with $\alpha \in [0, 1]$. The kernel $S_\alpha$ induces an orthogonal projection on $\ell^2(\Z)$ and it is known that the determinantal point processes $\PP_{S_\alpha}$ have the Ghosh-Peres number rigidity, see \cite{Ghosh-sine, Ghosh-rigid, BDQ-ETDS}. 

It is natural to ask the following 
\begin{question}\label{ques-rigid}
For $\kappa \ge 2$.  Let\[
J \subset \left[- \frac{2 \sqrt{\kappa} }{ \kappa + 1}, \,  \frac{2 \sqrt{\kappa} }{ \kappa + 1} \right]
\]
be an interval. Does  the determinantal point process  on $V_\kappa$ induced by the  orthogonal projection $T[\mathds{1}_J]$ have the Ghosh-Peres number rigidity? 
\end{question}


\section{$B_\Gamma(\ell^2(V_\kappa))$ is a commutative unital $C^*$-algebra}

\subsection{Proof of Lemma \ref{lem-alg}}
Define  $\delta_0: \N_0 \rightarrow \C$ by 
\begin{align}\label{def-phi-o}
\delta_0(n)  : = 
\left\{
\begin{array}{cc}
1 & n = 0,
\vspace{2mm}
\\ 
0 &  n \ge 1.
\end{array}
\right.
\end{align}
Then the corresponding $\Gamma$-equivariant kernel
\[
T_{\delta_0}(x, y)= \mathds{1}(x = y), \quad  \forall x, y \in V_\kappa
\] 
represents the identity operator $I$ on $\ell^2(V_\kappa)$. As before, we identify any bounded operator on $\ell^2(V_\kappa)$ with its kernel. Thus we have
\begin{align}\label{T-phi-o-id}
T_{\delta_0}  = I \in B_\Gamma(\ell^2(V_\kappa)).
\end{align}
Clearly, $B_\Gamma(\ell^2(V_\kappa)) \subset B(\ell^2(V_\kappa))$ is a linear subspace which is closed under the operation of taking the adjoint. Indeed, we have 
\begin{itemize}
\item  for any $\lambda_1, \lambda_2 \in \C$ and any $T_{\alpha_1}, T_{\alpha_2} \in B_\Gamma(\ell^2(V_\kappa)$,
\begin{align}\label{linear-iota}
\lambda_1 T_{\alpha_1} + \lambda_2 T_{\alpha_2} = T_{\lambda_1 \alpha_1+ \lambda_2 \alpha_2}.
\end{align}
\item for any $T_\alpha \in B_\Gamma(\ell^2(V_\kappa))$,
\begin{align}\label{adj-iota}
T_\alpha^* = T_{\overline{\alpha}}.
\end{align}
\end{itemize} 
It remains to prove that, for any $T_{\alpha_1}, T_{\alpha_2}\in B_\Gamma(\ell^2(V_\kappa))$, there exists $\beta: \N_0\rightarrow \C$ such that 
\begin{align}\label{alpha-alpha-beta}
T_{\alpha_1} T_{\alpha_2} = T_{\alpha_2} T_{\alpha_1} = T_\beta. 
\end{align}

Note first that the assumption $T_{\alpha_1}, T_{\alpha_2} \in B_\Gamma(\ell^2(V_\kappa)) \subset B(\ell^2(V_\kappa))$ implies that for any $x \in V_\kappa$, we have 
\begin{align}\label{bdd-assump}
\sum_{y \in V_\kappa} |\alpha_i(d(x, y))|^2 < \infty, \quad i = 1, 2. 
\end{align}
For proving \eqref{alpha-alpha-beta}, we need to use the following elementary facts.
\begin{itemize}
\item The  group $\Gamma$ acts transitively on $V_\kappa$.
\item Fix any vertex $o \in V_\kappa$. Then for any $g, h \in V_\kappa$ such that $d(o, x) = d(o, y)$, there exists $\gamma \in \Gamma$ such that 
\begin{align}\label{exchange-stab}
\gamma o = o, \quad \gamma x = y.
\end{align}
\item For any $x, y \in V_\kappa$, there exists $\gamma \in \Gamma$ such that
\begin{align}\label{ex-2-pt}
\gamma x = y, \quad \gamma y = x. 
\end{align}
\end{itemize}
Fix a sequence of vertices $(v_n)_{n\ge 0}$ in $V_\kappa$ such that $d(v_k, v_n) = |n-k|$ for any $n, k\ge 0$. For any $x,y \in V_\kappa$, let $ \gamma', \gamma'' \in \Gamma$ be two automorphisms of $V_\kappa$ (whose choices depend on $x, y$) such that 
\[
\gamma'x =v_0, \quad \gamma''v_0= v_0, \quad \gamma''\gamma'y = v_{d(v_0, \gamma' y)}.
\] 
Since $\gamma', \gamma''$ preserve the graph distance,  we have 
\[
d(x, y)  = d(\gamma' x, \gamma'y)= d(v_0, \gamma' y), \quad \gamma'' \gamma' y = v_{d(v_0, \gamma'y)} = v_{d(x, y)}
\]
and
\begin{align}\label{prod-T}
\begin{split}
& T_{\alpha_1} T_{\alpha_2}(x, y) = \sum_{z \in V_\kappa} \alpha_1( d (x, z)) \alpha_2( d(z, y))
\\
 =&   \sum_{z\in V_\kappa} \alpha_1( d (\gamma'x, \gamma'z)) \alpha_2( d(\gamma'z, \gamma'y))  
= \sum_{z' = \gamma'z\in V_\kappa} \alpha_1( d (v_0, z')) \alpha_2( d(z', \gamma'y)) 
\\
  =& \sum_{z' \in V_\kappa} \alpha_1( d (v_0, \gamma'' z')) \alpha_2( d(\gamma'' z', \gamma''\gamma' y))
 =  \sum_{z'' \in V_\kappa} \alpha_1( d (v_0,  z'')) \alpha_2( d( z'', v_{d(x, y)})).
\end{split}
\end{align}
 By \eqref{bdd-assump}, we may define $\beta: \N_0 \rightarrow \C$ by 
\begin{align}\label{def-beta}
\beta(n): = \sum_{z \in V_\kappa} \alpha_1( d (v_0,  z)) \alpha_2( d( z, v_n)).
\end{align}
Then \eqref{prod-T} implies
\[
T_{\alpha_1} T_{\alpha_2}  (x, y)=\beta(d(x, y)) = T_{\beta}(x, y).
\]
Since $x, y \in V_\kappa$ are chosen arbitrarily, we obtain $T_{\alpha_1} T_{\alpha_2}  = T_\beta$.  Similarly, by exchanging $\alpha_1$ and $\alpha_2$, we have $T_{\alpha_2} T_{\alpha_1} = T_{\widetilde{\beta}}$, where $\widetilde{\beta}: \N_0\rightarrow \C$ is defined similarly as in \eqref{def-beta}:
\[
\widetilde{\beta} (n) : = \sum_{z \in V_\kappa} \alpha_2( d (v_0,  z)) \alpha_1( d(z, v_n)).
\]
Now using \eqref{ex-2-pt}, for each $n \ge 0$, we can find $\gamma_n \in \Gamma$ such that $\gamma_n(v_0) = v_n$ and $\gamma_n(v_n) = v_0$. Hence 
\begin{align*}
\widetilde{\beta} (n) &  = \sum_{z \in V_\kappa} \alpha_2( d (\gamma_n(v_0),  \gamma_n( z))) \alpha_1( d(\gamma_n( z), \gamma_n(v_n))) 
\\
& = \sum_{z' =\gamma_n(z)  \in V_\kappa} \alpha_2( d (v_n,   z')) \alpha_1( d( z', v_0)) 
\\
& = \sum_{z  \in V_\kappa} \alpha_2( d (v_n,   z)) \alpha_1( d(z, v_0))  = \beta(n).
\end{align*}
Since $n \ge 0$ is arbitrarily chosen, we obtain $\widetilde{\beta} = \beta$ and complete the proof of the equality \eqref{alpha-alpha-beta}. This completes the proof of Lemma \ref{lem-alg}.

\subsection{A convolution algebra}
Set 
\begin{align}\label{def-A-kappa}
\mathcal{A}(\kappa):  = \Big\{\alpha: \N_0 \rightarrow \C \Big| T_\alpha \in B_\Gamma(\ell^2(V_\kappa)) \Big\}.
\end{align}
We define a convolution-type product $\circledast_\kappa$ on $\mathcal{A}(\kappa)$ as follows.  First fix any sequence $(v_n)_{n\ge 0}$ in $V_\kappa$ satifying 
\begin{align}\label{v-0-v-n}
d(v_k, v_n) = |k-n|, \quad \forall n, k \ge 0. 
\end{align}
By \eqref{bdd-assump}, for any $\alpha_1, \alpha_2 \in \mathcal{A}(\kappa)$,  we may define  $\alpha_1 \circledast_\kappa \alpha_2 \in \mathcal{A}(\kappa)$ by
\begin{align}\label{def-con-prod}
(\alpha_1 \circledast_\kappa \alpha_2) (n): =  \sum_{x \in V_\kappa} \alpha_1(d(v_0, x)) \alpha_2(d(x, v_n)), \quad \forall n \ge 0. 
\end{align}
Indeed, from the proof of Lemma \ref{lem-alg}, we have
\begin{align}\label{T-alpha-prod}
T_{\alpha_1 \circledast_\kappa \alpha_2}= T_{\alpha_1} T_{\alpha_2} = T_{\alpha_2 \circledast_\kappa \alpha_1} = T_{\alpha_2} T_{\alpha_1}.
\end{align}
Therefore, we do have $\alpha_1 \circledast_\kappa \alpha_2 \in \mathcal{A}(\kappa)$.  In particular,  \eqref{T-alpha-prod} implies also that the definition \eqref{def-con-prod} does not depend on the choices of the sequence $(v_n)_{n \ge 0}$ with the property \eqref{v-0-v-n} and  we have a natural isomorphism between the algebras $\mathcal{A}(\kappa)$ and $B_\Gamma(\ell^2(V_\kappa))$ given by 
\begin{align}\label{iso-A-B}
\begin{array}{cccc}
j: & \mathcal{A}(\kappa) &  \xrightarrow{\, \simeq \, } &  B_\Gamma(\ell^2(V_\kappa))
\\
 & \alpha & \mapsto & T_\alpha
\end{array}.
\end{align}
The following explicit computation of $\alpha_1 \circledast_\kappa \alpha_2$ will be useful later. 
\begin{lemma}\label{lem-convolution}
For any $\alpha_1, \alpha_2 \in \mathcal{A}(\kappa)$, we have 
\begin{align}\label{comp-alpha-alpha}
(\alpha_1  \circledast_\kappa \alpha_2)(n) = 
\left\{
\begin{array}{lc}
{\displaystyle \alpha_1(0) \alpha_2(0) + \sum_{l = 1}^\infty (\kappa+1) \kappa^{l-1}  \cdot  \alpha_1 (l) \alpha_2(l),} & \text{if $n = 0$}
\vspace{2mm}
\\
{\displaystyle \sum_{l = 0}^\infty  \kappa^l[\alpha_1(l)\alpha_2(l+1) + \alpha_1(l+1) \alpha_2(l)],} & \text{if $n = 1$}
\vspace{3mm}
\\
 {\displaystyle 
\sum_{l = 0}^\infty  \kappa^l[\alpha_1(l)\alpha_2(l+n) + \alpha_1(l+n) \alpha_2(l)] + \sum_{i =1}^{n-1}    \alpha_1 (i) \alpha_2(n-i)} & \text{if $n \ge 2$}
\\
 \qquad  \quad  +{\displaystyle \sum_{i =1}^{n-1}   \sum_{l =1}^\infty  (\kappa-1) \kappa^{l-1}  \alpha_1(l+i) \alpha_2( l +  (n-i)),} & 
\end{array}
\right..
\end{align}
\end{lemma}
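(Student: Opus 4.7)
The plan is to unfold the defining sum
$$(\alpha_1 \circledast_\kappa \alpha_2)(n) = \sum_{x \in V_\kappa} \alpha_1(d(v_0, x))\, \alpha_2(d(x, v_n))$$
by partitioning $V_\kappa$ according to the closest vertex on the geodesic $[v_0, v_n]$. Since $\ct_\kappa$ is a tree, every $x \in V_\kappa$ has a unique nearest vertex $y(x)$ on this geodesic; writing $y(x) = v_i$ with $0 \le i \le n$ and $l = d(v_i, x)$, the tree property gives $d(v_0, x) = i + l$ and $d(v_n, x) = (n - i) + l$, so the summand depends only on the pair $(i, l)$.

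First I would count the number $N(i, l)$ of vertices $x$ with $y(x) = v_i$ and $d(v_i, x) = l$. For $l = 0$ one has $x = v_i$, so $N(i, 0) = 1$. For $l \ge 1$ the geodesic from $v_i$ to $x$ must begin along a neighbor of $v_i$ not lying on $[v_0, v_n]$ and must never backtrack: if $1 \le i \le n - 1$, two of the $\kappa + 1$ neighbors of $v_i$ lie on the geodesic, giving $(\kappa - 1)\kappa^{l - 1}$ choices; if $i \in \{0, n\}$ and $n \ge 1$, only one neighbor does, giving $\kappa \cdot \kappa^{l - 1} = \kappa^l$; and if $n = 0$ (so $v_0 = v_n$) no neighbor is excluded, so $N(0, l) = (\kappa + 1)\kappa^{l - 1}$ is just the sphere size $\# S_l(v_0)$.

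Next I would assemble the three regimes. For $n = 0$ only $i = 0$ contributes, yielding $\alpha_1(0)\alpha_2(0) + \sum_{l \ge 1}(\kappa + 1)\kappa^{l - 1}\alpha_1(l)\alpha_2(l)$. For $n \ge 1$ I split the $i$-sum into the endpoint indices $i \in \{0, n\}$ and the interior indices $1 \le i \le n - 1$: the endpoint contributions combine into
$$\sum_{l \ge 0}\kappa^l\bigl[\alpha_1(l)\alpha_2(l + n) + \alpha_1(l + n)\alpha_2(l)\bigr],$$
where the $l = 0$ terms $\alpha_1(0)\alpha_2(n) + \alpha_1(n)\alpha_2(0)$ are absorbed using $\kappa^0 = 1$; the interior contributions produce $\sum_{i = 1}^{n - 1}\alpha_1(i)\alpha_2(n - i)$ from $l = 0$ and $\sum_{i = 1}^{n - 1}\sum_{l \ge 1}(\kappa - 1)\kappa^{l - 1}\alpha_1(l + i)\alpha_2(l + (n - i))$ from $l \ge 1$. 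When $n = 1$ the interior range is empty and only the endpoint sum survives, matching the stated formula.

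The main obstacle is the careful bookkeeping near the boundary of the geodesic: the count $N(i, l)$ behaves differently at interior points, at the two endpoints, and in the degenerate case $n = 0$, so the endpoint $l = 0$ terms must be re-absorbed into the geometric series by hand to match the form of \eqref{comp-alpha-alpha}. Absolute convergence of the rearrangement along the $(i, l)$ partition is guaranteed by the bounds in \eqref{bdd-assump}, applied to each $\alpha_i$ in turn via Cauchy--Schwarz.
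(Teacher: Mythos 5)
Your proposal is correct and takes essentially the same route as the paper: your partition of $V_\kappa$ by the nearest vertex $v_i$ on the geodesic from $v_0$ to $v_n$ coincides exactly with the paper's partition into the connected components $W_i'$ obtained by erasing the geodesic edges, and your counts $N(i,l)$ (namely $1$ for $l=0$, $\kappa^l$ at the endpoints, $(\kappa-1)\kappa^{l-1}$ at interior vertices, and $(\kappa+1)\kappa^{l-1}$ in the degenerate case $n=0$), the distance relations $d(v_0,x)=i+l$, $d(v_n,x)=(n-i)+l$, and the final assembly into the three regimes all match the paper's argument. Your closing remark justifying absolute convergence via \eqref{bdd-assump} and Cauchy--Schwarz is a sound (and slightly more explicit) version of the paper's appeal to the same bound.
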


\begin{proof}
Let $\alpha_1, \alpha_2 \in \mathcal{A}(\kappa)$. Fix a sequence $(v_n)_{n\ge 0}$ in $V_\kappa$ satisfying \eqref{v-0-v-n}. If $n = 0$, we have 
\[
(\alpha_1 \circledast_\kappa \alpha_2)(0)  =\sum_{x \in V_\kappa} \alpha_1(d(v_0, x)) \alpha_2 ( d(v_0, x)) = \sum_{l = 0}^\infty \#\{x\in V_\kappa| d(v_0, x) = l\} \cdot \alpha_1(l) \alpha_2(l). 
\]
Since 
\[
\#\{x\in V_\kappa| d(v_0, x) = l\}  = 
\left\{
\begin{array}{lc}
 1,& \text{if $l = 0$}
\\
(\kappa+1) \kappa^{l-1}, & \text{if $l \ge 1$}
\end{array}
\right.,
\]
we obtain the desired equality \eqref{comp-alpha-alpha} for $n =0$.

If $n =1$, then we have a partition of $V_\kappa$:  
\[
V_\kappa = W_0 \sqcup W_1,
\] obtained as follows:  erase the edge $\wideparen{v_0v_1}$ from the tree  $\ct_\kappa$ to get a forest consisting of two components, for $i =0$ or $i = 1$, let $W_i \subset V_\kappa$ be the subset consists exactly of the vertices of the  connected component containing $v_i$. It is easy to see 
\[
\left\{\begin{array}{cc}
d(x, v_1) = d(x, v_0) + 1, & \forall x \in W_0,
\vspace{2mm}
\\
d(x, v_0)  = d(y, v_1) +1, & \forall x \in W_1.
\end{array}
\right.
\] 
Moreover, for any integer $l \ge 0$, 
\[
\#\{ x \in W_i | d(v_i, x) = l\} =  \kappa^l, \quad i = 0, 1.
\]
Therefore, 
\begin{align*}
(\alpha_1 \circledast_\kappa \alpha_2)(1)  = &  \sum_{x \in W_0}  \alpha_1(d(v_0, x)) \alpha_2(d (x, v_1))  + \sum_{x \in W_1}  \alpha_1(d(v_0, x)) \alpha_2(d (x, v_1)) 
\\
= & \sum_{l=0}^\infty  \kappa^l  \cdot \alpha_1(l) \alpha_2(l+1) +   \sum_{l = 0}^\infty \kappa^l \cdot \alpha_1(l+1)   \alpha_2(l). 
\end{align*}
This completes the proof of the desired equality \eqref{comp-alpha-alpha} for $n =1$.

Finally, for $n \ge 2$, we have the following partition 
\[
V_\kappa = W_0' \sqcup W_1' \sqcup \cdots \sqcup W_n', 
\]
obtained as follows: erase all the edges $\wideparen{v_0v_1}, \wideparen{v_1v_2}, \cdots, \wideparen{v_{n-1}v_n}$ to get a forest consisting of $n+1$ connected compents,  then for all $0 \le i \le n$, the subset $W_i \subset V_\kappa$ consists exactly of the vertices of the connected component containing $v_i$. We have the following properties on $W_i$'s: 
\[
\left\{ 
\begin{array}{ll}
d(x, v_n)  = d(x, v_0) + n, & \text{if $x \in W_0'$,} 
\vspace{2mm}
\\
d(x, v_0) = d(x, v_n) + n, &\text{if $x\in W_n'$,}
\vspace{2mm}
\\
d(x, v_0) -i = d(x, v_n)- (n-i) \ge 0,  & \text{if $1\le i \le n - 1$ and $g \in W_i'$.} 
\end{array}
\right.
\]
Moreover, we have 
\[
\left\{
\begin{array}{lc}
\#\{x \in W_i' | d(v_i, x) = l\} =  \kappa^l &  \text{$\forall l \ge 0$ and $i \in \{0,  n\}$},
\vspace{2mm}
\\
\#\{x \in W_i' | d(v_0, x) = l + i \} = 
\left\{ 
\begin{array}{lc}
 1, & l  =0
\\
(\kappa-1) \kappa^{l-1} , & l \ge 1 
\end{array}
\right.
& \text{for all $1 \le i \le n-1$.}
\end{array}
\right.
\]
Therefore, 
\begin{align*}
(\alpha_1 \circledast_\kappa \alpha_2)(n)  =&  \sum_{0 \le i \le n }\sum_{x \in W_i} \alpha_1(d(v_0, x)) \alpha_2( d(x, v_n)) 
\\
=&  \sum_{l = 0}^\infty \kappa^l \alpha_1(l) \alpha_2( l + n) + \sum_{l=0}^\infty \kappa^l \alpha_1(l+n) \alpha_2(l) 
\\
& +  \sum_{i =1}^{n-1}   \Big( \alpha_1 (i) \alpha_2(n-i) +  \sum_{l =1}^\infty  (\kappa-1) \kappa^{l-1}  \alpha_1(l+i) \alpha_2( l +  (n-i)) \Big).
\end{align*}
This completes the proof of the desired equality \eqref{comp-alpha-alpha} for $n \ge 2$.
 \end{proof}

\section{Description of the set $B_\Gamma(\ell^2(V_\kappa))$}

\begin{proposition}\label{thm-bdd}
We have the following set-theoretical equality 
\[
B_\Gamma(\ell^2(V_\kappa))  = \Big\{T[\varphi] \Big| \varphi \in L^\infty(\Pi_\kappa)\Big\}.
\]
\end{proposition}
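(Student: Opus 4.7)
The plan is to realize every $T[\varphi]$ as $\varphi(T_{\delta_1}/(\kappa+1))$ via the Borel functional calculus of the adjacency operator, and to use the maximality of the multiplication algebra on $L^2(\Pi_\kappa)$ for the converse. First, using Lemma \ref{lem-convolution} with $\alpha_1 = \delta_1$ (or a direct neighbor count in $\ct_\kappa$) I would derive
\[
T_{\delta_1} T_{\delta_m} \;=\; \kappa\, T_{\delta_{m-1}} + T_{\delta_{m+1}} \quad (m \ge 2), \qquad T_{\delta_1}^2 \;=\; (\kappa+1)\, I + T_{\delta_2}.
\]
Rescaling \eqref{cartier-dunau} as $Q_m(t) := \#S_m(o)\, P_m(t)$ produces the identical recursion $Q_{m+1}(t) = (\kappa+1) t\, Q_m(t) - \kappa Q_{m-1}(t)$ for $m \ge 2$, with matching initial data $Q_0 = 1$, $Q_1 = (\kappa+1)t$, $Q_2 = (\kappa+1)^2 t^2 - (\kappa+1)$. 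Substituting $t = T_{\delta_1}/(\kappa+1)$ then yields the key identities
\[
T_{\delta_m} \;=\; \#S_m(o)\, P_m\!\left(\tfrac{T_{\delta_1}}{\kappa+1}\right), \qquad T[P_m] \;=\; P_m\!\left(\tfrac{T_{\delta_1}}{\kappa+1}\right),
\]
so $T[p] = p(T_{\delta_1}/(\kappa+1))$ for every polynomial $p$. Since $T_{\delta_1}$ is bounded (Schur's test gives $\|T_{\delta_1}\| \le \kappa+1$) and self-adjoint, it admits a Borel functional calculus. Comparing polynomial moments shows that the spectral measure of $\delta_o$ for $T_{\delta_1}/(\kappa+1)$ is exactly $\Pi_\kappa$; consequently the cyclic subspace generated by $\delta_o$ is $\ell^2(V_\kappa)_{\rad(o)}$, and $U_o$ intertwines $T_{\delta_1}/(\kappa+1)$ on this subspace with multiplication by $t$ on $L^2(\Pi_\kappa)$.

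For the inclusion $\{T[\varphi]\} \subseteq B_\Gamma(\ell^2(V_\kappa))$, given $\varphi \in L^\infty(\Pi_\kappa)$ I would pick any bounded Borel extension $\tilde\varphi$ on $\R$ and form $A_\varphi := \tilde\varphi(T_{\delta_1}/(\kappa+1))$; this is bounded and $\Gamma$-equivariant since $T_{\delta_1}$ is. On $\ell^2(V_\kappa)_{\rad(o)}$ it is conjugated by $U_o$ to $M_\varphi$ (the operator $M_{\tilde\varphi}$ depends only on $\varphi$ modulo $\Pi_\kappa$-null sets), hence
\[
A_\varphi\, \delta_o \;=\; U_o^{-1}(\varphi\cdot 1) \;=\; \sum_{n \ge 0} \widehat{\varphi}(n)\, \mathds{1}_{S_n(o)},
\]
which gives $A_\varphi(x, o) = \widehat{\varphi}(d(x, o))$ for every $x \in V_\kappa$. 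By $\Gamma$-equivariance the entire kernel of $A_\varphi$ is $\widehat{\varphi}(d(\cdot, \cdot))$, so $A_\varphi = T[\varphi] \in B_\Gamma(\ell^2(V_\kappa))$, independent of the Borel extension, with $\|T[\varphi]\| \le \|\varphi\|_{L^\infty(\Pi_\kappa)}$.

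For the converse, suppose $T_\alpha \in B_\Gamma(\ell^2(V_\kappa))$. By \eqref{exchange-stab} the stabilizer $\mathrm{Stab}_\Gamma(o)$ acts transitively on each sphere $S_n(o)$, so radial vectors are precisely its invariants; the $\Gamma$-equivariance of $T_\alpha$ then preserves this property, so $\ell^2(V_\kappa)_{\rad(o)}$ is $T_\alpha$-invariant. By Lemma \ref{lem-alg}, $T_\alpha$ commutes with $T_{\delta_1}$, whence $U_o T_\alpha|_{\rad(o)} U_o^{-1}$ commutes with $M_t$ on $L^2(\Pi_\kappa)$. The commutant of $M_t$ in $B(L^2(\Pi_\kappa))$ is the maximal abelian algebra $\{M_\varphi : \varphi \in L^\infty(\Pi_\kappa)\}$, supplying some $\varphi \in L^\infty(\Pi_\kappa)$. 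Matching matrix entries
\[
\alpha(n) \sqrt{\#S_n(o)} \;=\; \langle T_\alpha f_0, f_n\rangle \;=\; \langle M_\varphi \cdot 1,\, P_n/\|P_n\|\rangle_{L^2(\Pi_\kappa)} \;=\; \widehat{\varphi}(n)\sqrt{\#S_n(o)}
\]
then gives $\alpha = \widehat{\varphi}$ and $T_\alpha = T[\varphi]$, completing both inclusions.

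The principal technical delicacy is reconciling the Borel functional calculus of $T_{\delta_1}/(\kappa+1)$ with the $L^\infty(\Pi_\kappa)$-equivalence class of $\varphi$: the functional calculus a priori depends on all values of $\tilde\varphi$ on $\spec(T_{\delta_1}/(\kappa+1))$, not only on $\supp \Pi_\kappa$. This is resolved by noting that the kernel of $A_\varphi$ is fully determined by $A_\varphi \delta_o$ together with $\Gamma$-equivariance, and $A_\varphi \delta_o = U_o^{-1}(\varphi)$ depends on $\tilde\varphi$ only through its $\Pi_\kappa$-equivalence class; the identification $U_o T_{\delta_1}/(\kappa+1) U_o^{-1} = M_t$ on the cyclic subspace of $\delta_o$ is what makes this rigorous.
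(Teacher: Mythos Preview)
Your proof is correct and takes a genuinely different route from the paper's. The paper proves Proposition~\ref{thm-bdd} via Arnaud's characterization of positive definite radial kernels (Theorem~\ref{thm-arn}): for $T_\alpha$ bounded self-adjoint with $\lambda=\|T_\alpha\|$, both $T_{\lambda\delta_0\pm\alpha}$ are positive definite, hence by Arnaud correspond to positive measures on $[-1,1]$; their sum is $2\lambda\Pi_\kappa$, forcing both to be absolutely continuous with bounded Radon--Nikodym derivatives, which yields $\varphi\in L^\infty(\Pi_\kappa)$. The converse is the same argument in reverse. This is purely a positivity/measure-theoretic argument and says nothing about multiplicativity; the paper then needs separate work (Lemmas~\ref{lem-mul-poly} and~\ref{lem-cp-cont}) to prove Theorem~\ref{thm-alg-iso}.

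Your spectral-theoretic approach instead identifies $T[\varphi]=\varphi\!\bigl(T_{\delta_1}/(\kappa+1)\bigr)$ through the Borel functional calculus, using the cyclic vector $\delta_o$ and the MASA property of $L^\infty(\Pi_\kappa)$ for the converse. This bypasses Arnaud's theorem entirely and in fact delivers Theorem~\ref{thm-alg-iso} and Theorem~\ref{prop-inv} simultaneously: multiplicativity is automatic from functional calculus, and the intertwining $U_o$ is precisely the spectral-theorem unitary for the cyclic vector. Your argument also implicitly forces $\spec\bigl(T_{\delta_1}/(\kappa+1)\bigr)\subset\supp\Pi_\kappa$ (two Borel extensions give the same $\Gamma$-equivariant kernel, hence the same operator), so the ``delicacy'' you flag is in fact vacuous, though your resolution is valid regardless. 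The trade-off: the paper's route is more elementary once Arnaud is granted, while yours is more efficient and conceptual but imports the standard von Neumann algebra fact that $\{M_t\}'=L^\infty(\Pi_\kappa)$.
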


As before, let $(P_n)_{n\ge 0}$ be the sequence of Cartier-Dunau polynomials defined in \eqref{cartier-dunau}. 
{\flushleft \bf Elementary observation (E-O)}: 
Since the linear span of $(P_n)_{n\ge 0}$ are dense in the space $C([-1, 1])$ of continuous function on the interval $[-1,1]$, any Radon measure $\mu$ on $[-1, 1]$ is uniquely determined by  the sequence of complex numbers: 
\[
\Big(\int_{[-1, 1]} P_n(t) d\mu(t)\Big)_{n \ge 0}.
\] 

  Recall that a kernel $K: V_\kappa \times V_\kappa \rightarrow \C$ is called positive definite if for any finite subset $\Lambda \subset V_\kappa$ and any $(v_x)_{x \in \Lambda} \in \C^\Lambda$, we have 
\[
\sum_{x, y \in \Lambda} K (x, y) v_x \overline{v_y} \ge 0. 
\]

Recall the definition \eqref{def-T-phi} of radial Toeplitz kernels on $V_\kappa$. 
Our proof is based on the following Arnaud's classification \cite{Arnaud94} of positive definite radial Toeplitz kernels on $V_\kappa$.

\begin{theorem}[{\cite[Theorem 1.2]{Arnaud94}}]\label{thm-arn}
A radial Toeplitz kernel $T_\alpha: V_\kappa \times V_\kappa \rightarrow \C$ is positive definite if and only if there exists a finite positive measure $\mu_\alpha$ on the interval $[-1, 1]$ with
\begin{align}\label{phi-mu-phi}
\alpha(n) = \int_{[-1, 1]} P_n(t) d\mu_\alpha(t), \quad \forall n \ge 0.
\end{align}
\end{theorem}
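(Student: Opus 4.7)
The plan is to prove the two implications separately. The ``if'' direction reduces to the case of a single Dirac mass: once one shows that for each $t\in[-1,1]$ the kernel $K_t(x,y):=P_{d(x,y)}(t)$ is positive semi-definite on $V_\kappa$, positive definiteness of $T_\alpha(x,y)=\int K_t(x,y)\,d\mu_\alpha(t)$ follows by positivity of the integral. The ``only if'' direction is an inverse-spectral statement: given a positive definite radial $T_\alpha$, one must reconstruct a representing measure on $[-1,1]$ from the Cartier--Dunau moments $\alpha(n)$.

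For the ``if'' direction, I would realise $K_t$ as a diagonal matrix coefficient of a unitary representation of $\Gamma$. For $t$ in the principal range $\bigl[-\tfrac{2\sqrt{\kappa}}{\kappa+1},\tfrac{2\sqrt{\kappa}}{\kappa+1}\bigr]$, a Poisson/boundary realisation on an $L^2$-space of the tree boundary $\partial\ct_\kappa$ against a $t$-dependent quasi-invariant measure produces a spherical unitary representation whose $K_o$-spherical matrix coefficient is exactly $P_{d(x,y)}(t)$. For $t$ in the complementary intervals $\bigl(\tfrac{2\sqrt{\kappa}}{\kappa+1},1\bigr]$ and $\bigl[-1,-\tfrac{2\sqrt{\kappa}}{\kappa+1}\bigr)$ this construction degenerates and one instead uses the complementary series: completing $\C[V_\kappa]$ under a separate $t$-indexed positive sesquilinear form. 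The endpoints are immediate, since $K_1\equiv 1$ and $K_{-1}(x,y)=(-1)^{d(x,y)}$ is positive definite by the bipartite structure of $\ct_\kappa$.

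For the ``only if'' direction, I would use the GNS construction followed by the spectral theorem for a Jacobi operator. Fix $o\in V_\kappa$ and form the Hilbert space $\mathcal{H}_\alpha$ by completing the space of finitely supported complex functions on $V_\kappa$ under the sesquilinear form $\langle f,g\rangle_\alpha:=\sum_{x,y}T_\alpha(x,y)f(x)\overline{g(y)}$, modulo its null space. The $\Gamma$-action by pre-composition is unitary on $\mathcal{H}_\alpha$, and the image $\xi_o$ of $\delta_o$ is fixed by the stabiliser $K_o$ of $o$ because $T_\alpha$ is radial. Let $A\in B(\mathcal{H}_\alpha)$ denote the normalised adjacency operator, which averages a function over the $(\kappa+1)$ neighbours of a given vertex; since $A$ is a finite convex average of unitaries $\pi(\gamma)$, we have $\|A\|\le 1$. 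On the closure of the span of the radial vectors $\eta_n:=\mathds{1}_{S_n(o)}/\sqrt{\#S_n(o)}$ (passed to $\mathcal{H}_\alpha$), the self-adjoint operator $A$ has a symmetric three-term recurrence whose orthonormal polynomials are precisely the normalised Cartier--Dunau polynomials $P_n/\|P_n\|_{L^2(\Pi_\kappa)}$, because this recurrence is forced by the branching structure $\#\{y\sim x:y\in S_{n+1}(o)\}/\#\{y\sim x:y\in S_{n-1}(o)\}=\kappa$. Applying the spectral theorem to $A$ with cyclic vector $\xi_o$ yields a finite positive measure $\mu_\alpha$ supported on $\spec(A)\subset[-1,1]$, and functional calculus gives $\alpha(n)=T_\alpha(o,v_n)=\langle P_n(A)\xi_o,\xi_o\rangle_\alpha=\int P_n(t)\,d\mu_\alpha(t)$.

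The main obstacle is verifying that the adjacency operator really does act as a symmetric Jacobi operator on the radial subspace of $\mathcal{H}_\alpha$ with the Cartier--Dunau recurrence coefficients, because this identification depends on the non-trivial fact that the null space of $\langle\cdot,\cdot\rangle_\alpha$ does not collapse the radial subspace in an uncontrolled way. A subsidiary obstacle is the control $\|A\|\le 1$ on $\mathcal{H}_\alpha$; the clean route is to express $A$ as a convex combination of elements of $\pi(\Gamma)$ acting on the GNS space, which requires picking automorphisms that move $o$ to each of its $(\kappa+1)$ neighbours and averaging over $K_o$.
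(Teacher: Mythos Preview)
The paper does not prove this theorem at all: it is quoted from Arnaud \cite[Theorem 1.2]{Arnaud94} and used as a black box in the proof of Proposition \ref{thm-bdd}. So there is no proof in the paper to compare your proposal against.

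That said, your sketch is the standard argument and is correct in outline. The ``if'' direction via spherical representations (principal series realised on the Poisson boundary, complementary series by direct completion of $\C[V_\kappa]$) is exactly how positive definiteness of the spherical functions $t\mapsto P_{d(x,y)}(t)$ for $t\in[-1,1]$ is established in the literature on homogeneous trees. The ``only if'' direction via GNS plus the spectral theorem for the averaging operator is likewise the classical route; it is the Bochner--Godement theorem for the Gelfand pair $(\Gamma,K_o)$.

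On the obstacle you raise about the null space of $\langle\cdot,\cdot\rangle_\alpha$ possibly collapsing the radial subspace: this is not really a difficulty. Because $T_\alpha$ is radial and $A$ commutes with the $K_o$-averaging projection, you may work from the outset in the radial subspace $\spann\{\mathds{1}_{S_n(o)}:n\ge 0\}$ \emph{before} taking any quotient; on that subspace the form $\langle\cdot,\cdot\rangle_\alpha$ and the action of $A$ are already determined entirely by the sequence $(\alpha(n))_{n\ge 0}$ and the Cartier--Dunau three-term recurrence. The spectral measure of $A$ with cyclic vector $\xi_o$ is then produced directly, and any null vectors simply correspond to atoms or gaps in $\mu_\alpha$. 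An alternative, more elementary route that avoids GNS entirely: positive definiteness of $T_\alpha$ together with the nonnegative linearisation $P_mP_n=\sum_k c(m,n,k)P_k$, $c(m,n,k)\ge 0$ (which is exactly the radial convolution identity \eqref{comp-alpha-alpha} specialised to $\alpha_i=\widehat{P_{n_i}}$), shows that the linear functional $L(P_n):=\alpha(n)$ on $\C[t]$ is positive on squares; a Hamburger/Favard argument then yields a representing measure supported in $[-1,1]$.
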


Recall the definition of $\delta_0$  in \eqref{def-phi-o}. Since the radial Toeplitz kernel 
\[
T_{\delta_0}(x, y)= \mathds{1}(x = y), \quad \forall x, y \in V_\kappa
\]
is positive definite, by Theorem \ref{thm-arn}, there exists a unique  finite  positive measure $\mu_{\delta_0}$ on the interval $[-1, 1]$ corresponding to $\delta_0$ via the equality \eqref{phi-mu-phi}. 

\begin{lemma}\label{lem-id}
We have $\mu_{\delta_0} = \Pi_\kappa$.  As a consequence,  
\begin{align}\label{T-1-id}
T[1] = I,
\end{align}
where $I$ stands for the identity operator on $\ell^2(V_\kappa)$.
\end{lemma}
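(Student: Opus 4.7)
The plan is to identify $\mu_{\delta_0}$ by showing directly that $\Pi_\kappa$ satisfies the integral representation \eqref{phi-mu-phi} for $\alpha = \delta_0$, and then invoke uniqueness.

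First I would use the fact that $\Pi_\kappa$ is the orthogonality measure of the Cartier-Dunau polynomials $(P_n)_{n\ge 0}$, combined with $P_0 \equiv 1$. The orthogonality relations then give
\[
\int_{[-1,1]} P_n(t)\, d\Pi_\kappa(t) = \int_{[-1,1]} P_0(t) P_n(t)\, d\Pi_\kappa(t) = 0 \quad \text{for all } n \ge 1,
\]
while $\int P_0\, d\Pi_\kappa = 1$ because $\Pi_\kappa$ is a probability measure. Hence $\int P_n\, d\Pi_\kappa = \delta_0(n)$ for every $n \ge 0$, which means $\Pi_\kappa$ represents $\delta_0$ in the sense of \eqref{phi-mu-phi}.

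Next I would invoke uniqueness to conclude $\mu_{\delta_0} = \Pi_\kappa$. One can either appeal to the uniqueness clause implicit in Arnaud's Theorem \ref{thm-arn}, or more elementarily use the observation (E-O) that any finite Radon measure on $[-1,1]$ is determined by its moments against the basis $(P_n)_{n\ge 0}$, whose linear span is dense in $C([-1,1])$. Either way, the two finite positive measures $\mu_{\delta_0}$ and $\Pi_\kappa$ agree on all $P_n$, hence they coincide.

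For the consequence $T[1] = I$, I would unwind the definitions \eqref{def-hat}–\eqref{def-T-varphi}: taking $\varphi \equiv 1 \in L^\infty(\Pi_\kappa)$, the coefficients are
\[
\widehat{1}(n) = \int_\R P_n(t)\, d\Pi_\kappa(t) = \delta_0(n), \quad \forall n \ge 0,
\]
by the computation above. Therefore $T[1] = T_{\widehat{1}} = T_{\delta_0} = I$ in view of \eqref{T-phi-o-id}. There is no real obstacle here; the only subtle point is making sure we are quoting the right uniqueness statement (which comes for free from the density of polynomials in $C([-1,1])$).
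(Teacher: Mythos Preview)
Your proposal is correct and follows essentially the same route as the paper: both compute $\int P_n\,d\Pi_\kappa = \delta_0(n)$ via the orthogonality of $(P_n)_{n\ge 0}$ and $P_0\equiv 1$, then conclude $\mu_{\delta_0}=\Pi_\kappa$ by uniqueness and read off $T[1]=T_{\delta_0}=I$ from \eqref{T-phi-o-id}. The only cosmetic difference is that you spell out the uniqueness justification via (E-O), whereas the paper takes it as given from the setup preceding the lemma.
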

\begin{proof}
Recall that $(P_n)_{n\ge 0}$ is a sequence of orthogonal polynomials for the probability measure $\Pi_\kappa$ and the support of the probability measure $\Pi_\kappa$ is given by
\[
\supp (\Pi_\kappa) = [-\frac{2 \sqrt{\kappa}}{ \kappa + 1}, \frac{2 \sqrt{\kappa}}{ \kappa + 1}] \subset [-1, 1]. 
\]
Hence, by recalling that $P_0(x) = 1$, we have 
\[
\int_{[-1, 1]} P_n(t) d\Pi_\kappa (t) = \int_\R P_n(t) d\Pi_\kappa(t)  = \int_\R P_0(t) P_n(t) d\Pi_\kappa(t)  = \delta_0(n). 
\]
The above equality implies on the one hand the equality $\mu_{\delta_0} = \Pi_\kappa$ and on the other hand the equality $T[1] = T_{\delta_0}$. By \eqref{T-phi-o-id}, we have $T[1] = I$.  
\end{proof}

We shall also use the following simple and elementary lemma. 
\begin{lemma}\label{lem-norm-pos}
Let $A$ be a Hermitian bounded operator on a complex Hilbert space $\mathcal{H}$. Then the operator norm $\| A\|$ is given by 
\begin{align}\label{norm-A}
\| A\| = \min\Big\{ \lambda  \ge 0\Big| \text{$\lambda I +A \ge 0$ and $\lambda I - A \ge 0$}  \Big\},
\end{align}
where $I$ is the identity operator on $\mathcal{H}$. 

Moreover, if $\mathcal{H} = \ell^2(E)$ with $E$ a discrete countable set, then a Hermitian kernel $K: E\times E\rightarrow \C$ induces a bounded operator on $\ell^2(E)$ if and only if there exists a finite number $\lambda \ge 0$ such that  both kernels 
\[
E \times E \ni (x, y) \mapsto  (\lambda I \pm K)(x, y): = \lambda \mathds{1}(x = y) \pm K(x,y) \in \C
\]
are positive definite. In this case, the operator norm of the induced bounded operator is given by the smallest $\lambda \ge 0$ making both kernel $\lambda I \pm K$ positive definite. 
\end{lemma}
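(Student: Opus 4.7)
The plan is to prove the two assertions of Lemma~\ref{lem-norm-pos} by reducing everything to the standard fact that a bounded Hermitian operator $A$ on a complex Hilbert space $\mathcal{H}$ satisfies $\|A\| = \sup_{\|v\|=1} |\langle Av, v\rangle|$.

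For the first assertion, I would simply unpack the meaning of $\lambda I \pm A \ge 0$: this pair of operator inequalities is equivalent to $-\lambda\|v\|^2 \le \langle Av, v\rangle \le \lambda\|v\|^2$ holding for every $v \in \mathcal{H}$, which in turn is equivalent to $\sup_{\|v\|=1} |\langle Av, v\rangle| \le \lambda$. Combined with the Hermitian norm identity above, this shows that the set in \eqref{norm-A} coincides with $[\|A\|, \infty)$, so the minimum exists and equals $\|A\|$.

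For the second assertion, the forward implication is obtained by specializing Part~1 to $\mathcal{H} = \ell^2(E)$ with $\lambda = \|K\|$: testing the operator inequality $\lambda I \pm K \ge 0$ on finitely supported vectors $v = \sum_x v_x \delta_x$ expands to $\sum_{x,y} (\lambda \mathds{1}(x=y) \pm K(x,y)) v_x \overline{v_y} \ge 0$, which is exactly the positive definiteness of the kernels $\lambda I \pm K$. For the converse, assume there exists $\lambda \ge 0$ such that both $\lambda I \pm K$ are positive definite as kernels. Then for every finitely supported vector $v$ the Hermitian quadratic form $Q(v) = \sum_{x,y} K(x,y) v_x \overline{v_y}$ is real and satisfies $|Q(v)| \le \lambda \|v\|^2$.

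The main (and only nontrivial) step is to pass from this quadratic form bound on finitely supported vectors to a bona fide bounded operator on all of $\ell^2(E)$. By the polarization identity applied to the Hermitian form $Q$, the associated sesquilinear form $B(v, w) = \sum_{x, y} K(x, y) v_x \overline{w_y}$ is bounded by a constant multiple of $\lambda \|v\|\|w\|$ on the dense subspace of finitely supported vectors, and a scaling argument upgrades this to the sharp bound $|B(v,w)| \le \lambda\|v\|\|w\|$. Hence $B$ extends uniquely and continuously to a bounded Hermitian sesquilinear form on all of $\ell^2(E)$, and the Riesz representation theorem yields a bounded Hermitian operator $T \in B(\ell^2(E))$ with $\langle T\delta_y, \delta_x\rangle = K(x, y)$, i.e.\ $K$ induces a bounded operator. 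The operator norm formula follows immediately by applying Part~1 to $T$.
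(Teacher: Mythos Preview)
Your proof is correct and, for the first assertion, essentially identical to the paper's (both rest on $\langle(\lambda I \pm A)v,v\rangle \ge 0$ and the Hermitian identity $\|A\|=\sup_{\|v\|=1}|\langle Av,v\rangle|$; you are simply more explicit about the reverse inequality). For the second assertion the paper takes a slightly different but equivalent packaging: rather than extending the bounded Hermitian sesquilinear form from finitely supported vectors via polarization, scaling, and Riesz, it invokes the classical truncation identity $\|K\|=\sup_n\|K_{E_n}\|$ over an exhaustion $(E_n)$ of $E$ by finite subsets and applies Part~1 to each finite Hermitian matrix $K_{E_n}$---the content is the same density argument, just phrased on the operator side rather than the form side.
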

\begin{proof}
To prove the equality \eqref{norm-A} for  a Hermitian bounded operator $A$, it suffices to show $\| A\| \cdot I  \pm A \ge 0$ for any Hermitian bounded operator $A$ on $\mathcal{H}$.  But this is an immediate consequence of the following inequality:  
\[
\langle  ( \| A\| \cdot I  \pm A) v, v \rangle =  \| A\| \| v\|^2 \pm \langle A v, v\rangle \ge 0, \quad \forall v\in \mathcal{H}.
\]

The remaining assertions for  $\mathcal{H}= \ell^2(E)$ follow from  \eqref{norm-A} and the following classical argument: for any exhausting sequence $(E_n)_{n\ge 0}$ of finite subsets of $E$, that is, $E_n \subset E_{n + 1}$ and $\bigcup_{n\ge 1} E_n = E$, if we denote by $K_{E_n}$ the finite matrix obtained by truncation onto $E_n\times E_n$ of the kernel $K$, then 
\[
\| K\| = \sup_{n \ge 1} \| K_{E_n}\|,
\] 
where we use the convention that $\| K\| = \infty$ if $K$ does not induces a bounded operator on $\ell^2(E)$. 
\end{proof}

\begin{proof}[Proof of Proposition \ref{thm-bdd}]
Assume that $T_{\alpha} \in B_\Gamma(\ell^2(V_\kappa))$. Using the elementary identities
\[
T_{\Re(\alpha)} = \frac{T_\alpha + T_\alpha^*}{2}\in B_\Gamma(\ell^2(V_\kappa)), \quad T_{\Im(\alpha)}  = \frac{T_\alpha  - T_{\alpha}^*}{2}\in B_\Gamma(\ell^2(V_\kappa)),
\]
we may assume without loss of generality that $T_\alpha$ is self-adjoint or equivalently, $\alpha$ is real-valued. If  $T_\alpha$ is a Hermitian bounded operator and if we denote $\lambda  = \| T_\alpha\|\ge 0$, then by Lemma \ref{lem-norm-pos},  we have  $\lambda I \pm T_\alpha \ge 0$.  Thus by \eqref{T-phi-o-id},  we obtain the operator order inequalities
\[
T_{\lambda \delta_0 \pm \alpha} \ge 0. 
\]
This implies that the corresponding kernels $T_{\lambda \delta_0 \pm \alpha}$ are positive definite and by Theorem \ref{thm-arn},  there exist positive Radon measures $\mu_{\lambda \delta_0 \pm \alpha}$ on $[-1, 1]$, which are uniquely determined by $\lambda \delta_0 + \alpha$ and $\lambda \delta_0 - \alpha$ respectively,   such that 
\[
\lambda \delta_0(n) \pm \alpha(n) = \int_{[-1, 1]} P_n  d\mu_{\lambda \delta_0 \pm \alpha}, \quad \forall n \ge 0. 
\]
Set 
\[
\mu_\alpha: = \frac{\mu_{\lambda \delta_0 +\alpha} - \mu_{\lambda \delta_0 - \alpha}}{2}.
\]
Then $\mu_\alpha$ is a signed Radon measure on $[-1, 1]$ such that  
\[
\alpha(n) = \int_{[-1, 1]} P_n  d\mu_{\alpha}, \quad \forall n \ge 0.
\]
Furthermore, by Lemma \ref{lem-id},   we obtain
\[
\pm \alpha(n) = \int_{[-1, 1]} P_n   d (\mu_{\lambda \delta_0 \pm \alpha} -  \lambda \Pi_\kappa), \quad \forall n \ge 0. 
\]
Thus by the elementary observation (E-O), we have 
\begin{align}\label{mu-alpha-2-way}
\mu_\alpha = \mu_{\lambda \delta_0 + \alpha} -  \lambda \Pi_\kappa =  \lambda \Pi_\kappa - \mu_{\lambda \delta_0 - \alpha}. 
\end{align}
Hence 
\begin{align}\label{sum-2-meas}
\mu_{\lambda \delta_0 + \alpha} + \mu_{\lambda \delta_0 - \alpha}  = 2  \lambda \Pi_\kappa. 
\end{align}
But recall that both Radon measures $\mu_{\lambda \delta_0 \pm \alpha}$ are positive measures. Therefore, the equality \eqref{sum-2-meas} implies that both measures   $\mu_{\lambda \delta_0 \pm \alpha}$ are absolutely continuous with respect to the probability measure $\Pi_\kappa$. If we denote the corresponding Radon-Nikodym derivatives by
\[
\rho_{\lambda \delta_0 \pm \alpha} (t): = \frac{d \mu_{\lambda \delta_0 \pm \alpha}}{d \Pi_\kappa}(t) \ge 0, \quad   \text{for $\Pi_{\kappa}$-a.e. $t \in  \supp(\Pi_\kappa) \subset [-1, 1]$},
\]
Then theg equality \eqref{sum-2-meas} implies also that 
\[
\rho_{\lambda \delta_0 + \alpha} (t) + \rho_{\lambda \delta_0 - \alpha} (t) = 2 \lambda,  \quad  \text{for $\Pi_{\kappa}$-a.e. $t$}.
\]
Hence $\rho_{\lambda \delta_0 \pm \alpha} \in L^\infty(\Pi_\kappa)$. Finally, by \eqref{mu-alpha-2-way}, we have 
\[
\mu_\alpha = \rho_{\lambda \delta_0  + \alpha} \Pi_\kappa - \lambda \Pi_\kappa  = (\rho_{\lambda \delta_0 + \alpha} - \lambda) \Pi_\kappa. 
\]
In other words, by setting $\varphi  = \rho_{\lambda \delta_0 + \alpha} - \lambda \in L^\infty(\Pi_\kappa)$, we obtain 
\[
\alpha(n) = \int_{[-1, 1]} P_n \varphi d\Pi_\kappa =  \int_\R P_n \varphi d\Pi_\kappa, \quad \forall n \ge 0. 
\]
That is, we have $T_\alpha = T[\varphi]$. Thus 
\[
B_\Gamma(\ell^2(V_\kappa))  \subset \Big\{T[\varphi] \Big| \varphi \in L^\infty(\Pi_\kappa)\Big\}.
\]

Conversely, if $\varphi \in L^\infty(\Pi_\kappa)$, we want to show that $T[\varphi] \in B_\Gamma(\ell^2(V_\kappa)$. By writing $\varphi = \Re(\varphi) + i \Im (\varphi)$ and using $T[\varphi] = T[\Re(\varphi)] + i T[\Im (\varphi)]$, we may assume without loss of generality that $\varphi$ is real-valued.  Let $\|\varphi\|_\infty = \lambda \ge 0$, then since $\varphi$ is real-valued, we have $\lambda \pm \varphi(t) \ge 0$ for $\Pi_\kappa$-a.e. $t$. Therefore, by Theorem \ref{thm-arn} and \eqref{T-1-id}, the two kernels
\[
T[\lambda \pm \varphi] = \lambda I \pm T[\varphi]
\] 
are both positive definite. Hence by Lemma \ref{lem-norm-pos}, $T[\varphi]$ is a bounded operator on $\ell^2(V_\kappa)$.  This implies the desired inclusion 
\[
 \Big\{T[\varphi] \Big| \varphi \in L^\infty(\Pi_\kappa)\Big\} \subset B_\Gamma(\ell^2(V_\kappa)).
\]
\end{proof}

\section{$B_\Gamma(\ell^2(V_\kappa))$ is isomorphic to $L^\infty(\Pi_\kappa)$}

In this section, we are going to prove Theorem \ref{thm-alg-iso}.
\subsection{Outline of the strategy}
Before proceeding to the proof of Theorem \ref{thm-alg-iso}., we briefly explain our strategy. Recall the map $\iota: L^\infty(\Pi_\kappa) \rightarrow B_\Gamma(\ell^2(V_\kappa))$ defined in \eqref{def-iota}: $\iota(\varphi) = T[\varphi]$.
\begin{itemize}
\item Clearly, by Proposition \ref{thm-bdd}, the Elementary Observation (E-O) and the equalities \eqref{linear-iota} and \eqref{adj-iota}, the map $\iota$ is a linear bijection between $L^\infty(\Pi_\kappa) $ and  $B_\Gamma(\ell^2(V_\kappa))$ which preserves the operation of the corresponding involutions on both spaces. 
\item By  the equality $T[1] = I$ obtained in \eqref{T-1-id}, the map $\iota$ is unital. 
\item If $\varphi \in L^\infty(\Pi_\kappa)$ is an element with $\varphi \ge 0$, then by Theorem \ref{thm-arn} and Proposition \ref{thm-bdd}, the operator $T[\varphi]$ is positive. Hence $\iota$ is positive.
\item A classical result in the theory of $C^*$-algebras says that any positive unital linear map between unital $C^*$-algebras is automatically contractive and thus norm-continuous. This fact however will not be directly used in this paper. 
\item
Therefore, it remains to prove the  key multiplicative property
\begin{align}\label{mul-property}
T[\varphi_1 \varphi_2] = T[\varphi_1]T[\varphi_2], \quad \forall \varphi_1, \varphi_2 \in L^\infty(\Pi_\kappa). 
\end{align}
\item We shall first prove in Lemma \ref{lem-mul-poly} below that the multiplicative property \eqref{mul-property} for all polynomial functions $\varphi_1, \varphi_2 \in \C[t]$.
\item Although $\iota$ is norm-continuous, the space $\C[t]$ of polynomials is however not norm dense in $L^\infty(\Pi_\kappa)$. But $\C[t] \subset L^\infty(\Pi_\kappa)$ is dense in the weak-star-topology $\sigma(L^\infty, L^1)$ on $L^\infty(\Pi_\kappa)$.   Therefore, for extending the multiplicative property \eqref{mul-property} for polynomials to all functions in $L^\infty(\Pi_\kappa)$, we shall use certain continuity result with respect to the weak-star topology on $L^\infty(\Pi_\kappa)$.   However, using this approach, we will have to work with certain topology in $B_\Gamma(\ell^2(V_\kappa))$  and need to deal with the continuity of the operation of  the operator-product in  $B_\Gamma(\ell^2(V_\kappa))$. That is, we need to study the continuity of the map 
\begin{align}\label{op-prod-map}
B_\Gamma(\ell^2(V_\kappa)) \times B_\Gamma(\ell^2(V_\kappa))  \ni (T[\varphi_1], T[\varphi_2]) \mapsto T[\varphi_1]T[\varphi_2] \in B_\Gamma(\ell^2(V_\kappa))
\end{align}
with respect to the chosen topology.
\item For avoiding working with the continuity of the map \eqref{op-prod-map},  we work with certain topology on the algebra $\mathcal{A}(\kappa)$ defined in \eqref{def-A-kappa} and work with the convolution-type product $\circledast_\kappa$ defined in \eqref{comp-alpha-alpha}.  See Lemma \ref{lem-cp-cont} below. 
\end{itemize}

\subsection{Details of the proof}

Now we proceed to the proof of Theorem \ref{thm-alg-iso}. Recall the definition \eqref{def-hat} for all $\varphi\in L^1(\Pi_\kappa)$:
\[
\widehat{\varphi}(n): = \int_\R P_n \varphi d\Pi_\kappa. 
\]
Recall also the definition \eqref{def-A-kappa}  of $\mathcal{A}(\kappa)$ and the definition \eqref{comp-alpha-alpha} for the convolution-type product $\circledast_\kappa$ on $\mathcal{A}(\kappa)$. 
  
\begin{lemma}\label{lem-mul-poly}
For any $\varphi_1, \varphi_2 \in \C[t]$, we have 
\[
T[\varphi_1 \varphi_2] = T[\varphi_1]T[\varphi_2]. 
\]
Therefore, for any $\varphi_1, \varphi_2 \in \C[t]$, we have 
\[
\widehat{\varphi_1\varphi_2} = \widehat{\varphi_1} \circledast_\kappa \widehat{\varphi_2}.
\]
\end{lemma}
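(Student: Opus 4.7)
The plan is to reduce the desired multiplicativity $T[\varphi_1\varphi_2]=T[\varphi_1]T[\varphi_2]$ to the single key identity $T[t]T[\varphi]=T[t\varphi]$ for every polynomial $\varphi$, and then bootstrap using linearity of $\varphi\mapsto T[\varphi]$ together with induction in the degree.

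The first preliminary step would be to identify $T[t]$ explicitly. Since $P_0=1$ and $P_1=t$, the recurrence \eqref{cartier-dunau} and the orthogonality of $(P_n)_{n\ge 0}$ with respect to $\Pi_\kappa$ show that $\widehat{t}(n)=\int tP_n\,d\Pi_\kappa$ equals $\tfrac{1}{\kappa+1}$ when $n=1$ and vanishes for all other $n$. Hence
\[
T[t](x,y)=\tfrac{1}{\kappa+1}\mathds{1}(d(x,y)=1),
\]
so $T[t]$ is precisely $\tfrac{1}{\kappa+1}$ times the adjacency operator of $\ct_\kappa$. In particular each row of its kernel has only $\kappa+1$ nonzero entries, so every operator product below reduces to a genuinely finite sum.

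Next I would verify the key identity $T[t]T[\varphi]=T[t\varphi]$ entrywise. The geometric input is that for any $x,y\in V_\kappa$ with $d(x,y)=n\ge 1$, exactly one neighbor of $x$ lies on the geodesic from $x$ to $y$ (at distance $n-1$ from $y$), while the remaining $\kappa$ neighbors of $x$ all sit at distance $n+1$ from $y$. Therefore
\[
T[t]T[\varphi](x,y)=\tfrac{1}{\kappa+1}\sum_{z\sim x}\widehat{\varphi}(d(z,y))=\tfrac{1}{\kappa+1}\widehat{\varphi}(n-1)+\tfrac{\kappa}{\kappa+1}\widehat{\varphi}(n+1).
\]
On the other hand, the recurrence \eqref{cartier-dunau} gives
\[
\widehat{t\varphi}(n)=\int\varphi\cdot(tP_n)\,d\Pi_\kappa=\tfrac{\kappa}{\kappa+1}\widehat{\varphi}(n+1)+\tfrac{1}{\kappa+1}\widehat{\varphi}(n-1),
\]
and the two coincide. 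The degenerate case $n=0$, in which all $\kappa+1$ neighbors of $x$ are at distance $1$ from $y=x$, gives $\widehat{\varphi}(1)$ on both sides directly.

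Finally, iterating the key identity yields $T[t]^k T[\varphi]=T[t^k\varphi]$ for every $k\ge 0$ and every polynomial $\varphi$. Expanding an arbitrary $\varphi_1=\sum_k c_k t^k$ and invoking linearity \eqref{linear-iota} of $\alpha\mapsto T_\alpha$ then produces $T[\varphi_1]T[\varphi_2]=T[\varphi_1\varphi_2]$ for all $\varphi_1,\varphi_2\in\C[t]$. The second displayed identity $\widehat{\varphi_1\varphi_2}=\widehat{\varphi_1}\circledast_\kappa\widehat{\varphi_2}$ is then obtained by transporting this multiplicativity through the algebra isomorphism $j$ of \eqref{iso-A-B} together with the identity \eqref{T-alpha-prod}. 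The main (indeed the only) obstacle is establishing the key identity: the entire argument turns on the precise numerological match between the branching structure of $\ct_\kappa$ (one neighbor closer to any fixed $y$, the other $\kappa$ farther) and the asymmetric coefficients $\tfrac{1}{\kappa+1},\tfrac{\kappa}{\kappa+1}$ built into the Cartier--Dunau recurrence; once this is in hand, the rest is routine bookkeeping via linearity and induction.
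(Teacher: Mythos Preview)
Your proof is correct and follows the same overall strategy as the paper: both reduce bilinearly to the single identity $T[t\cdot\varphi]=T[t]\,T[\varphi]$ (equivalently $T[t^{1+m}]=T[t]\,T[t^m]$), compute $T[t]=T_{\frac{1}{\kappa+1}\delta_1}$, and then match the Cartier--Dunau recurrence against the tree structure.

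The one genuine difference is in how the key identity is verified. The paper transports everything to the convolution algebra $\mathcal{A}(\kappa)$ and invokes the explicit formula \eqref{comp-alpha-alpha} for $\alpha_1\circledast_\kappa\alpha_2$ (Lemma~\ref{lem-convolution}), specializing to $\alpha_1=\frac{1}{\kappa+1}\delta_1$. You instead compute the kernel product $T[t]T[\varphi](x,y)$ directly from the geometry of neighbors in $\ct_\kappa$ (one neighbor of $x$ at distance $n-1$ from $y$, the other $\kappa$ at distance $n+1$), which is precisely the special case of \eqref{comp-alpha-alpha} that is actually needed here. Your route is therefore slightly more self-contained for this lemma, as it does not appeal to the full convolution formula; the paper's route, on the other hand, has already set up \eqref{comp-alpha-alpha} for later use in Lemma~\ref{lem-cp-cont}, so there is no net economy either way.
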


\begin{lemma}\label{lem-cp-cont}
For any integer $n \ge 0$ and any $\varphi_0 \in L^\infty(\Pi_\kappa)$, the map 
\begin{align}\label{varphi-to-dast-n}
L^\infty(\Pi_\kappa) \ni \varphi \mapsto (\widehat{\varphi} \circledast_\kappa \widehat{\varphi_0})  (n)\in \C
\end{align}
is continuous with respect to the weak-star topology on $L^\infty(\Pi_\kappa)$. 
\end{lemma}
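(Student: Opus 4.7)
The plan is to deduce Lemma \ref{lem-cp-cont} from the stronger identity
\begin{align*}
(\widehat{\varphi} \circledast_\kappa \widehat{\varphi_0})(n) = \int_\R P_n(t)\,\varphi(t)\,\varphi_0(t)\,d\Pi_\kappa(t), \qquad \forall \, \varphi,\varphi_0 \in L^\infty(\Pi_\kappa).
\end{align*}
Once this identity is established, the weak-star continuity asserted in the lemma is immediate: for fixed $\varphi_0$ the right-hand side is $\varphi \mapsto \int \varphi \cdot (P_n \varphi_0)\, d\Pi_\kappa$, and $P_n \varphi_0$ lies in $L^\infty(\Pi_\kappa) \subset L^1(\Pi_\kappa)$ because $P_n$ is bounded on the compact set $\supp(\Pi_\kappa)$ and $\Pi_\kappa$ is a probability measure.

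To prove the identity I would argue by density of polynomials. Lemma \ref{lem-mul-poly} gives $(\widehat\varphi \circledast_\kappa \widehat{\varphi_0})(n) = \widehat{\varphi\varphi_0}(n)$ for $\varphi,\varphi_0 \in \C[t]$, and by definition $\widehat{\varphi\varphi_0}(n) = \int P_n \varphi\varphi_0\,d\Pi_\kappa$, so the identity holds on $\C[t] \times \C[t]$. Since $\Pi_\kappa$ is compactly supported, Stone--Weierstrass guarantees that $\C[t]$ is norm-dense in $L^2(\Pi_\kappa)$, and being a probability measure we have $L^\infty(\Pi_\kappa) \subset L^2(\Pi_\kappa)$. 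It therefore suffices to extend both sides to continuous bilinear forms on $L^2(\Pi_\kappa) \times L^2(\Pi_\kappa)$. The right-hand side is trivially bounded by $\|P_n\|_{L^\infty(\Pi_\kappa)}\|\varphi\|_{L^2}\|\varphi_0\|_{L^2}$. For the left-hand side I would use the explicit formula of Lemma \ref{lem-convolution}, combined with the Cartier--Dunau norm computation $\|P_0\|_{L^2(\Pi_\kappa)}^2 = 1$ and $\|P_l\|_{L^2(\Pi_\kappa)}^2 = ((\kappa+1)\kappa^{l-1})^{-1}$ for $l \ge 1$ (which follows from the Jacobi-matrix relation $\|P_{l+1}\|^2 = \kappa^{-1}\|P_l\|^2$ forced by the recurrence \eqref{cartier-dunau}, pinned by the second moment $\|P_1\|^2 = \int t^2\,d\Pi_\kappa = (\kappa+1)^{-1}$). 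Parseval then reads $\sum_l|\widehat\varphi(l)|^2/\|P_l\|^2 = \|\varphi\|_{L^2}^2$. For $n=0$ the convolution formula is precisely $\sum_l \widehat\varphi(l)\widehat{\varphi_0}(l)/\|P_l\|^2$, so Cauchy--Schwarz yields the bound $\|\varphi\|_{L^2}\|\varphi_0\|_{L^2}$. For $n \ge 1$, each of the series appearing in Lemma \ref{lem-convolution} has the shape $\sum_l w_l\, \widehat\varphi(l+a)\widehat{\varphi_0}(l+b)$ with a geometric weight $w_l \in \{\kappa^l,\,(\kappa-1)\kappa^{l-1}\}$; splitting the weight symmetrically as $w_l = w_l^{1/2}\cdot w_l^{1/2}$ and applying Cauchy--Schwarz bounds each sum by $C_n\|\varphi\|_{L^2}\|\varphi_0\|_{L^2}$ via Parseval.

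The principal obstacle is the last estimate: in the case $n \ge 2$, Lemma \ref{lem-convolution} presents mixed double sums whose weights $(\kappa-1)\kappa^{l-1}$ do not align directly with the Parseval weights $1/\|P_{l+i}\|^2$ or $1/\|P_{l+n-i}\|^2$, so a careful re-indexing and redistribution of geometric factors is required before Cauchy--Schwarz can be applied cleanly. Once this bookkeeping is handled, continuity on $L^2 \times L^2$ of both bilinear forms is in place, density upgrades the polynomial identity from Lemma \ref{lem-mul-poly} to all of $L^\infty(\Pi_\kappa) \times L^\infty(\Pi_\kappa)$, and the weak-star continuity claim follows as observed in the first paragraph.
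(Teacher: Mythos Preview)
Your approach is correct but takes a genuinely different route from the paper. The paper argues \emph{directly}: it rewrites the convolution using Lemma~\ref{lem-convolution} as
\[
(\widehat{\varphi}\circledast_\kappa\widehat{\varphi_0})(n)=\sum_{l\ge 0}\kappa^l\,C(l,\kappa,\widehat{\varphi_0},n)\,\widehat{\varphi}(l)=\int_\R \psi\,\varphi\,d\Pi_\kappa,
\]
where $\psi:=\sum_{l\ge 0}\kappa^l\,C(l,\kappa,\widehat{\varphi_0},n)\,P_l$ is shown (via Lemma~\ref{lem-L2-norm} and the same geometric bookkeeping you flag) to lie in $L^2(\Pi_\kappa)\subset L^1(\Pi_\kappa)$. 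Weak-star continuity is then immediate, without invoking Lemma~\ref{lem-mul-poly} or any density argument.

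Your argument instead proves the \emph{identity} $(\widehat{\varphi}\circledast_\kappa\widehat{\varphi_0})(n)=\widehat{\varphi\varphi_0}(n)$ for all $\varphi,\varphi_0\in L^\infty(\Pi_\kappa)$ by extending both sides to continuous bilinear forms on $L^2\times L^2$ and matching them on polynomials via Lemma~\ref{lem-mul-poly}. This is valid and not circular (Lemma~\ref{lem-mul-poly} is proved independently), but notice what it buys: that identity \emph{is} precisely equation~\eqref{lim-to-prove}, the heart of Theorem~\ref{thm-alg-iso}. So your proof of this lemma already subsumes the two-limit argument in the paper's proof of Theorem~\ref{thm-alg-iso}, rendering Lemmas~\ref{lem-cp-cont} and~\ref{lem-trivial} redundant as separate ingredients. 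The paper's route is more economical for the lemma in isolation; yours collapses the whole section into a single bilinear-extension step. The ``obstacle'' you highlight is the same re-indexing the paper absorbs into the bound $|C(l,\kappa,\alpha,n)|\le 2n\sum_{|i-l|\le n}|\alpha(i)|$, and is routine in either framing.
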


We shall compare Lemma \ref{lem-cp-cont} with the following parallel but obvious lemma (in fact, after proving Theorem \ref{thm-alg-iso}, the maps  \eqref{varphi-to-dast-n} and \eqref{varphi-to-n} are the same). 
\begin{lemma}\label{lem-trivial}
For any integer $n \ge 0$ and any $\varphi_0 \in L^\infty(\Pi_\kappa)$, the map 
\begin{align}\label{varphi-to-n}
L^\infty(\Pi_\kappa) \ni \varphi \mapsto \widehat{\varphi \varphi_0}(n) = \int_\R P_n \varphi \varphi_0 d\Pi_\kappa \in \C
\end{align}
is continuous with respect to weak-star-topology on $L^\infty(\Pi_\kappa)$.
\end{lemma}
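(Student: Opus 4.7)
The plan is straightforward: this is essentially the \emph{definition} of the weak-star topology on $L^\infty(\Pi_\kappa) = L^1(\Pi_\kappa)^\ast$. A net $\varphi_\lambda$ converges to $\varphi$ in $\sigma(L^\infty, L^1)$ precisely when $\int_\R g \varphi_\lambda \, d\Pi_\kappa \to \int_\R g \varphi \, d\Pi_\kappa$ for every $g \in L^1(\Pi_\kappa)$. So it suffices to observe that the functional in \eqref{varphi-to-n} has the form $\varphi \mapsto \int_\R \varphi \cdot g_{n, \varphi_0} \, d\Pi_\kappa$ for the fixed integrand $g_{n, \varphi_0} := P_n \varphi_0$, and then verify that $g_{n, \varphi_0} \in L^1(\Pi_\kappa)$.

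First I would check integrability. Since $\supp(\Pi_\kappa) \subset [-2\sqrt{\kappa}/(\kappa+1), 2\sqrt{\kappa}/(\kappa+1)] \subset [-1,1]$ is compact and $P_n$ is a polynomial, $P_n$ is bounded on $\supp(\Pi_\kappa)$, say by a constant $C_n < \infty$. Combined with $\varphi_0 \in L^\infty(\Pi_\kappa)$, this gives $\|P_n \varphi_0\|_{L^\infty(\Pi_\kappa)} \le C_n \|\varphi_0\|_{L^\infty(\Pi_\kappa)} < \infty$, and since $\Pi_\kappa$ is a probability measure, $L^\infty(\Pi_\kappa) \subset L^1(\Pi_\kappa)$; hence $g_{n,\varphi_0} \in L^1(\Pi_\kappa)$. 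Therefore the map \eqref{varphi-to-n} coincides with the canonical evaluation functional associated to $g_{n,\varphi_0}$, which is weak-star continuous by definition. There is no real obstacle here; the only substantive step is recognizing that the polynomial factor $P_n$ is bounded on $\supp(\Pi_\kappa)$, after which the conclusion is immediate. The genuine work of the section lies in Lemma \ref{lem-cp-cont}, for which this lemma serves only as a trivial parallel benchmark.
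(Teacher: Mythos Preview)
Your proof is correct and matches the paper's own treatment: the paper does not write out a proof but simply remarks that Lemma~\ref{lem-trivial} ``is trivial and follows directly from the definition of weak-star topology,'' which is precisely the argument you give. Your observation that $P_n\varphi_0 \in L^\infty(\Pi_\kappa) \subset L^1(\Pi_\kappa)$ is the only point that needed checking, and you handle it cleanly.
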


\begin{proof}[Proof of  Theorem \ref{thm-alg-iso} by using Lemma \ref{lem-mul-poly} and Lemma \ref{lem-trivial}]
As we explained in the beginning of this section, it suffices to prove \eqref{mul-property}. 

Now fix any pair  $\varphi_1, \varphi_2 \in L^\infty(\Pi_\kappa)$. Recall the algebra-isomorphism \eqref{iso-A-B} between $\mathcal{A}(\kappa)$ and $B_\Gamma(\ell^2(V_\kappa))$. We have
\[
T[\varphi_1 \varphi_2] = T_{\widehat{\varphi_1 \varphi_2}}, \quad T[\varphi_1] T_{\varphi_2} = T_{\widehat{\varphi_1}} T_{\widehat{\varphi_2}} = T_{\widehat{\varphi_1} \circledast_\kappa \widehat{\varphi_2}}.
\]
It suffices to show that 
\begin{align}\label{lim-to-prove}
\widehat{\varphi_1 \varphi_2} = \widehat{\varphi_1} \circledast_\kappa \widehat{\varphi_2}. 
\end{align}
For $i = 1,2$, let $(\varphi_{i, l})_{l\ge 0}$ be a sequence of polynomials such that $\varphi_{i, l}$ converges to $\varphi_i$ with respect to the weak-star topology of $L^\infty(\Pi_\kappa)$ and $\| \varphi_{i, l}\|_\infty \le \| \varphi_i\|_\infty$ (here we can take sequences instead of nets because the  weak-star topology on the unit ball of $L^\infty(\Pi_\kappa)$ is metrizable). Then by Lemma \ref{lem-mul-poly}, we have 
\begin{align}\label{id-before-lim}
\widehat{\varphi_{1, l} \varphi_{2, m}} = \widehat{\varphi_{1, l}} \circledast_\kappa \widehat{\varphi_{2, m}}, \quad \forall l, m \ge 0.
\end{align}
We then fix any integer $n \ge 0$. Apply Lemma \ref{lem-trivial} twice, we obtain 
\begin{align}\label{2-lim-p}
\lim_{l\to\infty}  \Big( \lim_{m\to\infty}   \widehat{\varphi_{1, l} \varphi_{2, m}}  (n) \Big) = \lim_{l\to\infty}    \widehat{\varphi_{1, l} \varphi_{2}}  (n) =  \widehat{\varphi_{1} \varphi_{2, m}}  (n).
\end{align}
Apply Lemma \ref{lem-cp-cont} twice (note that $\circledast_\kappa$ makes $\mathcal{A}(\kappa)$ a commutative algebra), we obtain 
\begin{align}\label{2-lim-cp}
\lim_{l\to\infty}  \Big( \lim_{m\to\infty}  \widehat{\varphi_{1, l}} \circledast_\kappa \widehat{\varphi_{2, m}}   (n) \Big)  = \lim_{l\to\infty}   \widehat{\varphi_{1, l}} \circledast_\kappa \widehat{\varphi_{2}}   (n) =   \widehat{\varphi_{1}} \circledast_\kappa \widehat{\varphi_{2}}   (n).
\end{align}
Combining \eqref{id-before-lim}, \eqref{2-lim-p} and \eqref{2-lim-cp}, we obtain 
\[
\widehat{\varphi_1 \varphi_2} (n) = \widehat{\varphi_1} \circledast_\kappa \widehat{\varphi_2} (n). 
\]
Since $n \ge 0$ is chosen arbitrarily, we complete the proof of the equality \eqref{lim-to-prove} and thus complete the whole proof of Theorem \ref{thm-alg-iso}. 
\end{proof}

It remains to prove Lemma \ref{lem-mul-poly} and Lemma \ref{lem-cp-cont} (note that as we said before, Lemma \ref{lem-trivial} is trivial and follows directly from the definition of weak-star topology). 

\medskip
{ \flushleft \bf On Lemma \ref{lem-mul-poly}.}
Since  $T[\varphi_1\varphi_2]$ and $T[\varphi_1]T[\varphi_2]$ depends on $\varphi_1, \varphi_2$ bilinearly, to prove Lemma \ref{lem-mul-poly}, it suffices to prove that for any integers $l, m \ge 0$, 
\begin{align}\label{T-x-lm}
T[t^{l + m}] = T[t^l] T[t^m].
\end{align}
Recall  the equality  \eqref{T-1-id} that $T[1] = I$. Therefore,  the equality \eqref{T-x-lm} holds for $l = 0$ and any $m \ge 0$. 

Clearly, the equalities for all $l \ge 1, m \ge 0$ follow from the following particular case 
\begin{align}\label{T-x-one-m}
T[t^{1 + m}] = T[t] T[t^m], \quad \forall m \ge 0.
\end{align}
Indeed, we can use the following induction argument.
\begin{itemize}
\item Assume first that the equalities \eqref{T-x-one-m} hold   for all $m \ge 0$. In other words, the equalities \eqref{T-x-lm} hold for $l = 1$ and for all $m \ge 0$. 
\item  If for a fixed integer $l \ge 1$, the equalities \eqref{T-x-lm}  hold for all $m \ge 0$. Then we can show that these equalities hold for all $m \ge 0$ by replacing $l$ by $l +1$. This is done as follows.  Recall that $B_\Gamma(\ell^2(V_\kappa))$ is commutative. For any $m \ge 0$ , we have 
\[
T[t^{l+1 + m}]  \overset{\text{induction hypothesis}}{=\joinrel=\joinrel=\joinrel= \joinrel=\joinrel= \joinrel=\joinrel=\joinrel=\joinrel=\joinrel=}  T[t^l] T[t^{1 +m}] \overset{\eqref{T-x-one-m}}{=\joinrel=\joinrel=} T[t^l]  T[t] T[t^m]  \overset{\eqref{T-x-one-m}}{=\joinrel=\joinrel=} T[t^{l+1}]T[t^m].
\]
\end{itemize}

It remains to prove \eqref{T-x-one-m}. Using the isomorphism \eqref{iso-A-B} between $\mathcal{A}(\kappa)$ and $B_\Gamma(\ell^2(V_\kappa))$, the equalities \eqref{T-x-one-m} are equivalent to 
\begin{align}\label{alpha-one-m}
\beta_{1 + m} = \beta_1 \circledast_\kappa \beta_m, \quad \forall m \ge 0,
\end{align}
where $\beta_m\in \mathcal{A}(\kappa)$ is a function $\beta_m: \N_0 \rightarrow \C$ defined by 
\[
\beta_m (n) = \widehat{t^m}(n): = \int_{\R} P_n(t) t^m d\Pi_\kappa(t), \quad \forall n \ge 0. 
\]
Let us  compute explictly $\beta_1\in \mathcal{A}(\kappa)$.  Recall first that 
\[
\beta_0 (n) = \widehat{1}(n) = \delta_0(n). 
\]
Using the definition, if $n =0$, then recalling $P_0(t)\equiv 1$ and $P_1(t) = t$, we have 
\[
\beta_1(0)  = \int_\R P_0(t) t  d\Pi_\kappa(t) = \int_\R P_0(t) P_1(t)  d\Pi_\kappa(t)  = \langle P_0, P_1\rangle_{L^2(\Pi_\kappa)} = 0.
\]
For $n \ge 1$, recalling the recursion relation \eqref{cartier-dunau}, we have 
\begin{align*}
\beta_1(n) & = \int_\R P_n(t) t d\Pi_\kappa(t) = \int_\R \Big[ \frac{\kappa}{ \kappa + 1} P_{n+1} (t) + \frac{1}{\kappa+1} P_{n-1} (t)\Big] d\Pi_\kappa(t) 
\\
&  = \frac{\kappa}{\kappa + 1} \delta_0(n+1) +  \frac{1}{\kappa + 1} \delta_0(n-1) = \frac{1}{\kappa +1} \delta_1(n), \quad \forall n \ge 1.
\end{align*}
Combining the above two equalities, we obtain 
\begin{align}\label{expl-alpha-one}
\beta_1(n) = \frac{1}{\kappa + 1} \delta_1(n), \quad \forall n \ge 0. 
\end{align}
Then using the recursion relation \eqref{cartier-dunau}, for $n \ge 1$,  we have 
\begin{align}\label{alpha-one-m-n}
\begin{split}
\beta_{1 + m}(n) & = \int_\R P_n(t) t^{1 + m} d\Pi_\kappa(t) = \int_\R t^m \Big[ \frac{\kappa}{ \kappa + 1} P_{n+1} (t) + \frac{1}{\kappa+1} P_{n-1} (t)\Big]  d\Pi_\kappa(t) 
\\
& = \frac{\kappa}{\kappa+1} \beta_m(n+1) + \frac{1}{\kappa + 1} \beta_m(n-1).
\end{split}
\end{align}
If $n=0$, we have 
\begin{align}\label{alpha-one-m-zero}
\beta_{1 + m}(0) = \int_\R t^{1 + m} d\Pi_\kappa(t) = \int_\R P_1(t) t^m d\Pi_\kappa(t) =  \beta_m (1).
\end{align}
 
On the other hand, using the explicit formula \eqref{comp-alpha-alpha} for computing the $\circledast_\kappa$ product and the equality $\beta_1 = \frac{1}{\kappa + 1} \delta_1$, we obtain
\begin{align}\label{expl-conv-p}
(\beta_1 \circledast_\kappa \beta_m) (n) = \left\{
\begin{array}{lc}
{\displaystyle  \beta_m(1),} & \text{if $n = 0$}
\vspace{2mm}
\\
{\displaystyle    \frac{\kappa}{\kappa+1} \beta_m(2) + \frac{1}{\kappa + 1} \beta_m (0),} & \text{if $n = 1$}
\vspace{3mm}
\\
 {\displaystyle \frac{\kappa}{\kappa + 1} \beta_m(1+n) ] +  \frac{1}{\kappa+1} \beta_m(n-1)} & \text{if $n \ge 2$}
\end{array}
\right..
\end{align}
By comparing \eqref{alpha-one-m-n}, \eqref{alpha-one-m-zero} with \eqref{expl-conv-p}, we obtain the equality 
\[
\beta_{1 + m} (n) =  (\beta_1 \circledast_\kappa \beta_m )(n), \quad \forall n\ge 0.
\]
This completes the proof of the equality \eqref{expl-alpha-one} and thus completes the whole proof of Lemma \ref{lem-mul-poly}.

\medskip
{\flushleft \bf On Lemma \ref{lem-cp-cont}.}
For proving Lemma \ref{lem-cp-cont}, we need to compute the $L^2(\Pi_\kappa)$-norms $\| P_n\|_{L^2(\Pi_\kappa)}$ of the Cartier-Dunau polynomials. 

\begin{lemma}\label{lem-L2-norm}
The $L^2(\Pi_\kappa)$-norms of the Cartier-Dunau polynomials are given by 
\[
\| P_n\|_{L^2(\Pi_\kappa)} =  
\left\{
\begin{array}{cc}
 1& \text{if $n =0$}
\vspace{2mm}
\\ 
\frac{1}{\sqrt{\kappa^{n-1} (\kappa +1)}} & \text{if $n \ge 1$}
\end{array}
\right..
\]
\end{lemma}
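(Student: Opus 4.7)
The plan is to compute $h_n := \|P_n\|_{L^2(\Pi_\kappa)}^2$ directly from the three-term recurrence \eqref{cartier-dunau} together with the mutual orthogonality of the $P_n$'s with respect to $\Pi_\kappa$. The key trick is to evaluate inner products of the form $\langle tP_n, P_{n-1}\rangle_{L^2(\Pi_\kappa)}$ in two different ways, exploiting self-adjointness of multiplication by $t$.

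First, since $\Pi_\kappa$ is a probability measure and $P_0 \equiv 1$, I get $h_0 = 1$ immediately, which handles the $n=0$ case. For $n=1$, I compute $\langle tP_1, P_0 \rangle_{L^2(\Pi_\kappa)}$ two ways. Expanding $tP_1 = \frac{\kappa}{\kappa+1}P_2 + \frac{1}{\kappa+1}P_0$ via \eqref{cartier-dunau} and using orthogonality gives $\frac{1}{\kappa+1}h_0 = \frac{1}{\kappa+1}$. On the other hand, $\langle tP_1, P_0\rangle = \langle P_1, tP_0 \rangle = \langle P_1, P_1\rangle = h_1$, since $tP_0 = P_1$. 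Hence $h_1 = \frac{1}{\kappa+1}$.

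For $n \ge 2$, the same strategy gives a recursion for $h_n$. Applying \eqref{cartier-dunau} to $tP_n$ and using orthogonality of $P_{n+1}$ and $P_{n-1}$ yields $\langle tP_n, P_{n-1}\rangle = \frac{1}{\kappa+1}h_{n-1}$. On the other side, writing $tP_{n-1} = \frac{\kappa}{\kappa+1}P_n + \frac{1}{\kappa+1}P_{n-2}$ (which requires $n-1 \ge 1$, hence the restriction $n \ge 2$) and again invoking orthogonality gives $\langle P_n, tP_{n-1}\rangle = \frac{\kappa}{\kappa+1}h_n$. Equating the two expressions produces $h_n = \frac{1}{\kappa}h_{n-1}$. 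Iterating from the base case $h_1 = \frac{1}{\kappa+1}$ yields $h_n = \frac{1}{\kappa^{n-1}(\kappa+1)}$, and taking square roots gives the claimed formula.

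There is no real obstacle here: the argument is the standard extraction of squared norms from a symmetric three-term recurrence, and the only minor bookkeeping is to handle $n=1$ separately (where $tP_0 = P_1$ rather than $\frac{\kappa}{\kappa+1}P_1 + \frac{1}{\kappa+1}P_{-1}$) before settling into the geometric recursion valid for $n \ge 2$.
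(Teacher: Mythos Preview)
Your proof is correct. Both your argument and the paper's extract the squared norms $h_n=\|P_n\|_{L^2(\Pi_\kappa)}^2$ from the three-term recurrence together with orthogonality, but the bookkeeping is organized differently. The paper first records the leading coefficient $k_n=\big(\tfrac{\kappa+1}{\kappa}\big)^{n-1}$, writes $h_n=k_n\int t^nP_n\,d\Pi_\kappa$, and then uses the recurrence once (plus $\int t^{n-1}P_{n+1}\,d\Pi_\kappa=0$) to get $\int t^nP_n\,d\Pi_\kappa=\tfrac{1}{\kappa+1}\int t^{n-1}P_{n-1}\,d\Pi_\kappa$, iterating down to $\tfrac{1}{(\kappa+1)^n}$. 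You instead apply the recurrence on \emph{both} sides of $\langle tP_n,P_{n-1}\rangle=\langle P_n,tP_{n-1}\rangle$ to obtain directly $\tfrac{\kappa}{\kappa+1}h_n=\tfrac{1}{\kappa+1}h_{n-1}$, bypassing the leading coefficient entirely. Your route is marginally more streamlined for this specific goal; the paper's has the side benefit of recording $k_n$ explicitly (which it needs elsewhere, in the Appendix). Either way the content is the same standard computation.
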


\begin{proof}
Recall that the leading coefficient $k_n$ of the Cartier-Dunau polynomial $P_n$ is given in \eqref{leading-coef}: 
\[
k_0 = 1 \text{\, and \,} k_n =     \left(\frac{\kappa +1}{\kappa} \right)^{n-1}, \quad \forall n\ge 1.
\]
Since $(P_n)_{n\ge 0}$ is a sequence of orthogonal polynomials with respect to the measure $\Pi_\kappa$, we have 
\[
\int_\R t^k P_n(t) d\Pi_\kappa(t) = 0 \quad \text{for $0 \le k < n$.}
\]
Therefore, 
\begin{align*}
\| P_n\|_{L^2(\Pi_\kappa)}^2 = \int_\R P_n(t)^2 d\Pi_\kappa(t)=  k_n  \int_\R t^n P_n(t) d\Pi_\kappa(t), \quad \forall n \ge 0.  
\end{align*}
Hence  for any $n \ge 1$, by using the recursion relation \eqref{cartier-dunau}, we obtain 
\begin{align*}
\int_\R t^n P_n(t) d\Pi_\kappa(t)  & =  \int_\R t^{n-1} \Big[ \frac{\kappa}{ \kappa + 1} P_{n+1} (t) + \frac{1}{\kappa+1} P_{n-1} (t)\Big] d\Pi_\kappa(t) 
\\
& =  \frac{1}{\kappa + 1}   \int_\R t^{n-1} P_{n-1}(t) d\Pi_\kappa(t). 
\end{align*}
By repeating the above argument, we obtain 
\[
\int_\R t^n P_n(x) d\Pi_\kappa(t) = \frac{1}{(\kappa +1)^{n}} \int_\R   P_0(t) d\Pi_\kappa(t)  = \frac{1}{(\kappa+1)^n}. 
\]
Therefore, we obtain the desired equality
\[
\| P_n\|_{L^2(\Pi_\kappa)}^2 = \frac{k_n}{(k+1)^n} =  
\left\{
\begin{array}{cc}
 1& \text{if $n =0$}
\vspace{2mm}
\\ 
\frac{1}{\kappa^{n-1} (\kappa +1)} & \text{if $n \ge 1$}
\end{array}
\right..
\]
\end{proof}

\begin{proof}[Proof of Lemma \ref{lem-cp-cont}]
We will prove Lemma \ref{lem-cp-cont} for $n \ge 2$, the proof in the case $n =0$ or $n =1$ is similar. 

Assume $n\ge 2$ and $\varphi_0\in L^\infty(\Pi_\kappa)$.  Write $\alpha = \widehat{\varphi_0} \in \mathcal{A}$. By  \eqref{bdd-assump}, we have 
\begin{align}\label{disc-l2-norm}
\sum_{l =0}^\infty   \kappa^l \cdot |\alpha(l)|^2<\infty. 
\end{align}
Using the explicit formula \eqref{comp-alpha-alpha} and the assumption that $n \ge 2$,  we have 
\begin{align*}
(\widehat{\varphi} \circledast_\kappa \widehat{\varphi_0}) (n)  = &   \sum_{l = 0}^\infty  \kappa^l[ \widehat{\varphi} (l)\alpha(l+n) + \widehat{\varphi}(l+n) \alpha(l)] + \sum_{i =1}^{n-1}    \widehat{\varphi} (i) \alpha(n-i)   
\\
& + \sum_{i =1}^{n-1}   \sum_{l =1}^\infty  (\kappa-1) \kappa^{l-1}   \widehat{\varphi} (l+i) \alpha( l +  (n-i)) 
\\
=&   \sum_{l=0}^\infty   \kappa^l \cdot  C(l, \kappa, \alpha, n) \cdot  \widehat{\varphi}(l),
\end{align*}
where $C(l, \kappa, \alpha, n)$ is a constant depends on $\alpha, \kappa, l, n$. It is easy to see that 
\[
|C(l, \kappa, \alpha, n)|\le  2 n \sum_{i \ge 0,\,   |i - l|\le n} |\alpha(i)|.
\]
Consequently, by  \eqref{disc-l2-norm}, we have 
\[
\sum_{l=0}^\infty   \kappa^l |C(l, \kappa, \alpha, n)|^2 <\infty.
\]
By Lemma \ref{lem-L2-norm}, there exists $c(\kappa)>0$, such that 
\[
\|P_{l}\|_{L^2(\Pi_\kappa)}^2 \le   c(\kappa) \kappa^{-l}, \quad \forall l \ge 0.
\]
Since $(P_n)_{n\ge 0}$ is an orthogonal sequence in $L^2(\Pi_\kappa)$, we have 
\begin{align*}
\Big\|\sum_{l =0}^\infty \kappa^l \cdot C(l, \kappa, \alpha, n)  P_l \Big\|_{L^2(\Pi_\kappa)}^2& = \sum_{l=0}^\infty \kappa^{2l} | C(l, \kappa, \alpha, n)|^2 \cdot \|P_{l}\|_{L^2(\Pi_\kappa)}^2 
\\
 & \le c(\kappa) \sum_{l=0}^\infty \kappa^{l} | C(l, \kappa, \alpha, n)|^2   <\infty.
\end{align*}
Therefore, the series 
\[
\psi(t): = \sum_{l =0}^\infty \kappa^l \cdot C(l, \kappa, \alpha, n)  P_l (t)
\]
defines a function in $L^2(\Pi_\kappa)$. Now we can write 
\[
(\widehat{\varphi} \circledast_\kappa \widehat{\varphi_0}) (n)  = \sum_{l =0}^\infty \kappa^l \cdot C(l, \kappa, \alpha, n) \int_\R P_l(t) \varphi(t) d\Pi_\kappa(t)   = \int_\R \psi(t) \varphi(t) d\Pi_\kappa(t).
\]
Since $\psi \in L^2(\Pi_\kappa) \subset L^1 (\Pi_\kappa)$,  the map 
\[
L^\infty(\Pi_\kappa) \ni  \varphi \mapsto (\widehat{\varphi} \circledast_\kappa \widehat{\varphi_0}) (n)   = \int_\R \psi  \varphi d\Pi_\kappa 
\]
is clearly continuous with respect to the weak-star topology on $L^\infty(\Pi_\kappa)$.  
\end{proof}

\section{The common invariant subspace}
\begin{proof}[Proof of Lemma \ref{lem-inv}]
Recall the definition \eqref{def-hat} of $\widehat{\varphi}$ for a function $\varphi \in L^1(\Pi_\kappa)$. By using the fact that $(P_n)_{n\ge 0}$ are orthogonal with respect to $\Pi_\kappa$  and using  the following equality obtained in Lemma \ref{lem-L2-norm}:
\begin{align}\label{sq-norm-sp}
\| P_n\|_{L^2(\Pi_\kappa)}^2 = \frac{1}{ \#S_n(o)},
\end{align}
 we have 
\[
\widehat{P_n}=  \frac{\delta_n}{\# S_n(o)} \in \mathcal{A}(\kappa), \quad \forall n \ge 0. 
\]
Recall the definition \eqref{onb-f-n} of the natural orthonormal basis $(f_n)_{n\ge 0}$ of the space $\ell^2(V_\kappa)_{\rad(o)}$.  Clearly, we have 
\begin{align}\label{f-n-Sn}
f_n(y)  = \frac{\mathds{1}_{S_n(o)}(y)}{\sqrt{\# S_n(o)}}=  \sqrt{\# S_n(o)}\cdot T[P_n] (y,o).
\end{align}
Therefore, for any $T[\varphi] \in B_\Gamma(\ell^2(V_\kappa))$, we have 
\begin{align}\label{T-varphi-f-n}
\begin{split}
(T[\varphi] f_n) (x) & = \sum_{h \in V_\kappa} T[\varphi](x, y) f_n(y) =   \sqrt{\# S_n(o)} \cdot \sum_{y \in V_\kappa} T[\varphi](x, y) \cdot T[P_n] (o, y) 
\\
& =   \sqrt{\# S_n(o)} \cdot  (T[\varphi] T[P_n])(x,o)  =  \sqrt{\# S_n(o)} \cdot(T[\varphi P_n])(x,o)
\\
&  = \sqrt{\# S_n(o)} \cdot \widehat{\varphi P_n} (d(x,o)),
\end{split}
\end{align}
where we used the equality $T[\varphi] T[P_n] = T[\varphi P_n]$. 
By definition of $\ell^2(V_\kappa)_{\rad(o)}$, we have 
\[
T[\varphi] f_n \in \ell^2(V_\kappa)_{\rad(o)}.
\] 
This shows that $\ell^2(V_\kappa)_{\rad(o)}$ is an invariant subspace for $T[\varphi]$ and it follows that it is a common invariant subspace of all operators in $B_\Gamma(\ell^2(V_\kappa))$. 
\end{proof}

\begin{proof}[Proof of Theorem \ref{prop-inv}]
An orthonormal basis of $L^2(\Pi_\kappa)$ is given by 
\[
\Big( \frac{P_n}{\| P_n\|_{L^2(\Pi_\kappa)}}\Big)_{n\ge 0}.
\]
Therefore, the operator $U_o: \ell^2(V_\kappa)_{\rad(o)} \rightarrow L^2(\Pi_\kappa)$ defined by \eqref{def-U-o} is a unitary operator.

 We now show that we do have the commutative diagram \eqref{CD-unitary}. Indeed, we only need to show that for any $\varphi \in L^\infty(\Pi_\kappa)$ and any $n\ge 0$, the equality 
\begin{align}\label{CD-equ-ev}
U_oT[\varphi]\left( \frac{\mathds{1}_{S_n(o)}}{\sqrt{\# S_n(o)}} \right)  =  M_\varphi U_o \left( \frac{\mathds{1}_{S_n(o)}}{\sqrt{\# S_n(o)}} \right)
\end{align}
holds. Using \eqref{T-varphi-f-n}, we have 
\[
T[\varphi]\left( \frac{\mathds{1}_{S_n(o)}}{\sqrt{\# S_n(o)}} \right)   =  \sqrt{\# S_n(o)} \cdot \sum_{k = 0}^\infty \widehat{\varphi P_n}  (k) \mathds{1}_{S_k(o)}.
\]
Hence, by recalling \eqref{sq-norm-sp}, we have
\begin{align*}
U_oT[\varphi]\left( \frac{\mathds{1}_{S_n(o)}}{\sqrt{\# S_n(o)}} \right)  & = \sqrt{\# S_n(o)} \cdot \sum_{k = 0}^\infty \widehat{\varphi P_n}  (k) \sqrt{\# S_k(o)}  U_o \left(  \frac{ \mathds{1}_{S_k(o)}}{\sqrt{\# S_k(o)}} \right)
\\
& =  \sqrt{\# S_n(o)} \cdot \sum_{k = 0}^\infty  \langle \varphi P_n, P_k \rangle_{L^2(\Pi_\kappa)} \sqrt{\# S_k(o)} \frac{P_k}{\| P_k\|_{L^2(\Pi_\kappa)}}
\\
& = \sqrt{\# S_n(o)} \cdot \sum_{k = 0}^\infty  \left \langle \varphi P_n, \frac{P_k}{\| P_k\|_{L^2(\Pi_\kappa)}}  \right\rangle_{L^2(\Pi_\kappa)} \frac{P_k}{\| P_k\|_{L^2(\Pi_\kappa)}}
\\
& = \sqrt{\# S_n(o)} \cdot \varphi P_n,
\end{align*}
where all the series are considered as $L^2(\Pi_\kappa)$-convergent series. On the other hand, 
\begin{align*}
 M_\varphi U_o \left( \frac{\mathds{1}_{S_n(o)}}{\sqrt{\# S_n(o)}} \right) = M_\varphi \left( \frac{P_n}{\| P_n\|_{L^2(\Pi_\kappa)}} \right) = \sqrt{\# S_n(o)} \cdot \varphi P_n.
\end{align*}
Thus we verified the equality \eqref{CD-equ-ev} for any $\varphi \in L^\infty(\Pi_\kappa)$ and any integer $n\ge 0$.
\end{proof}

\section{Appendix}
\subsection{The orthogonality measure of Cartier-Dunau polynomials }
 Here we use a  special case of  a result of Cohen-Trenholme \cite[Theorem 3]{Cohen84} on the calculation of the measure for which a sequence of polynomials with a constant recrusion formula is orthogonal. Recall the Cartier-Dunau polynomials defined in \eqref{cartier-dunau}.  Let $k_n$ be the leading coefficient of $P_n$. The recursion formula \eqref{cartier-dunau} implies that 
\begin{align}\label{leading-coef}
k_n =     \left(\frac{\kappa +1}{\kappa} \right)^{n-1}, \quad \forall n\ge 1.
\end{align}
Now  reduce $(P_n)_{n\ge 1}$ to a sequence $(Q_n)_{n\ge 1}$ of monic polynomials by
\[
Q_n(t) = \frac{P_n(t)}{k_n}  =\left( \frac{\kappa}{\kappa + 1}\right)^{n-1} P_n(t), \quad \forall n \ge 1
\]
and set $Q_0(t) \equiv \frac{\kappa+1}{\kappa}$, 
we obtain the recursion for the sequence $(Q_n)_{n\ge 0}$: 
\begin{align}\label{rec-rel}
Q_0(t) = \frac{\kappa + 1}{\kappa}, \quad Q_1(t) = t,  \quad Q_{n+1}(t) = tQ_n(t) - \frac{\kappa}{(\kappa + 1)^2} Q_{n-1}(t), \quad n \ge 1.
\end{align}
Then by applying \cite[Theorem 3]{Cohen84}, the measure $d\nu$ (unique up to a multiplicative constant) with respect to  which   $(Q_n)_{n\ge 0}$ are orthogonal polynomials is given by 
\[
d \nu(t) = \frac{\sqrt{4\kappa (\kappa + 1)^{-2} - t^2}}{1 - t^2}  \mathds{1}_{[- \frac{2 \sqrt{\kappa} }{ \kappa + 1}, \,  \frac{2 \sqrt{\kappa} }{ \kappa + 1} ]} (t) dt.
\]
By elementary calculus, we have 
\[
\int_{\R} d\nu = \frac{2\pi}{\kappa + 1}.
\]
It follows that the measure  $\Pi_\kappa$ given by the formula \eqref{pi-kappa}  is a probability measure and it is the unique one with respect to which $(P_n)_{n\ge 0}$ is a sequence of orthogonal polynomials.


\end{document}